\def\captionfont{\setb@se{11pt}\protect\footnotesize}
\def\captionfont{\protect\footnotesize}
\newcommand{\vertiii}[1]{{\left\vert\kern-0.25ex\left\vert\kern-0.25ex\left\vert #1 \right\vert\kern-0.25ex\right\vert\kern-0.25ex\right\vert}}
\newcommand{\msfv}{\mathsf{v}}
\newcommand{\mD}{\mathfrak D}
\newcommand{\md}{\mathfrak d}
\newcommand{\hf}{\frac{1}{2}}
\def\0{\mbox{\boldmath $0$}}
\newcommand{\nrm}[1]{\left\| #1 \right\|}
\newcommand{\ciptwo}[2]{\left( #1 , #2 \right)}
\newcommand{\eipns}[2]{\left[ #1 , #2 \right]_{\rm ns}}
\newcommand{\eipew}[2]{\left[ #1 , #2 \right]_{\rm ew}}
\newcommand{\viptwo}[2]{\left\langle #1, #2 \right\rangle}
\newcommand{\nabh}{\nabla_{\! h}}
	\newcommand\be {\begin{equation}}
	\newcommand\ee {\end{equation}}
	\newcommand\bx {{\bf x}}
	\newcommand\dt {{s}}
 \def\P{\mbox{$\mathsf{P}_h$}}
\newtheorem{thm}{Theorem}[section]
\newtheorem{prop}[thm]{Proposition}
\newtheorem{rmk}[thm]{Remark}
\newtheorem{lem}[thm]{Lemma}
\newtheorem{rem}[thm]{Remark}
\begin{document}

\title{A Second-Order Energy Stable Backward Differentiation Formula Method for the Epitaxial Thin Film Equation with Slope Selection}
\author{
Wenqiang Feng\thanks{Department of Mathematics, The University of Tennessee, Knoxville, TN 37996 (Corresponding Author: wfeng1@utk.edu)}
\and
Cheng Wang\thanks{Department of Mathematics, The University of Massachusetts, North Dartmouth, MA  02747 (cwang1@umassd.edu)}	
\and
Steven M. Wise\thanks{Department of Mathematics, The University of Tennessee, Knoxville, TN 37996 (swise1@utk.edu)}
\and
Zhengru Zhang\thanks{School of Mathematical Sciences, Beijing Normal University, Beijing 100875, P.R. China (zrzhang@bnu.edu.cn)} 
}

\maketitle
\numberwithin{equation}{section}

	\begin{abstract}
In this paper, we study a novel second-order energy stable Backward Differentiation Formula (BDF) finite difference scheme for the epitaxial thin film equation with slope selection (SS). One major challenge for the higher oder in time temporal discretization is how to ensure an unconditional energy stability and an efficient numerical implementation. We propose a general framework for designing the higher order in time numerical scheme with unconditional energy stability by using the BDF method with constant coefficient stabilized terms. Based on the unconditional energy stability property, we derive an $L^\infty_h (0,T; H_{h}^2)$ stability for the numerical solution and provide an optimal the convergence analysis. To deal with the 4-Laplacian solver in an $L^{2}$ gradient flow at each time step, we apply an efficient preconditioned steepest descent algorithm and preconditioned nonlinear conjugate gradient algorithm to solve the corresponding nonlinear system. Various numerical simulations are present to demonstrate the stability and efficiency of the proposed schemes and solvers.
	\end{abstract}

\textbf{Keywords:} Thin film epitaxy, p-Laplacian operator, second-order-in-time, energy stability, convergence analysis, steepest descent, nonlinear conjugate gradient, pre-conditioners, finite differences, fast Fourier transform
	\section{Introduction}
	
In this paper we will devise and analyze numerical methods for the epitaxial thin film model with slope selection, or, for short, just the slope selection (SS) equation. This equation is the gradient flow with respect to the energy
	\begin{equation}
F [\phi] := \int_\Omega \left( \frac14 \left(| \nabla \phi |^2 -1\right)^2 +\frac{\varepsilon^2}{2} ( \Delta \phi )^2  
	 \right) \,\mathrm{d}\bx \ , 
		\label{energy-SS-1} 
	\end{equation}
where $\Omega = (0, L_x)\times (0, L_y)$, $\phi:\Omega\rightarrow \mathbb{R}$ is a scaled height function of thin film and $\varepsilon$ is a constant which represents the width of the rounded corner. As is common, and natural, we will assume that $\phi$ is $\Omega$-periodic. The corresponding chemical potential  is defined to be the variational derivative 
	of the energy (\ref{energy-SS-1}), \emph{i.e.}, 
	\begin{equation}
	\mu: = \delta_\phi F =  - \nabla \cdot ( | \nabla \phi |^2 \nabla \phi ) + \Delta \phi + \varepsilon^2 \Delta^2 \phi .
		\label{chem-pot-SS}
	\end{equation}
In turn, the SS equation becomes the $L^2$ gradient flow associated with the energy (\ref{energy-SS-1}): 
	\begin{equation}
\partial_t \phi = - \mu = \nabla \cdot ( | \nabla \phi |^2 \nabla \phi ) - \Delta \phi - \varepsilon^2 \Delta^2 \phi . 
	\label{equation-SS}
	\end{equation}
The SS equation was proposed by P. Aviles and Y. Giga to study the dynamics of smectic liquid crystals in \cite{aviles1987mathematical}. Since then, it has attracted considerable attention in several related fields, for instance as a model for the deformation of thin film blisters \cite{ortiz1994morphology}, the delamination of compressed thin films \cite{gioia1997delamination}, the line energies for gradient vector fields in the plane  \cite{ambrosio1999line}, and the domain wall energy in a problem related to micro-magnetism \cite{riviere2001domain}.

In the thin film setting, the energy of the SS equation can be considered as two distinct parts. The first part is 
	\begin{equation}
	\label{eqn:Eenergy-ES}
F_{\rm ES}[\phi] :=  \int_\Omega \frac14 \left(| \nabla \phi |^2 -1\right)^2 \mathrm{d}\bx,
	\end{equation}
which describes, in some limited sense, the Ehrlich-Schwoebel effect--the phenomenon where atoms tend to move from a lower terrace to an upper terrace in the growth of atomic steps, promoting surface instability. Mathematically, the term $F_{\rm ES}$ gives the preference for epitaxial films with slope satisfying $|\nabla\phi|=1$, since this represents the minima of $F_{\rm ES}$.  The second part,  
 \begin{eqnarray}\label{eqn:Eenergy-SD}
 F_{\rm SD}[\phi] =  \int_\Omega \frac{\varepsilon^2}{2}  (\Delta \phi)^2  \mathrm{d}\bx, 
 \end{eqnarray}
 represents the surface diffusion effect which will give the rounded corners in the film. A smaller value of $\varepsilon$ corresponds to a sharper rounded corner. There are some other interesting physical predictions coming from the SS model; for instance, the surface roughness grows approximately at the rate $t^{1/3}$, the energy decays at approximately the rate $t^{-1/3}$ (see \cite{kohn03}), and the saturation time scale is expected to be the order of $\varepsilon^{-2}$ (see \cite{wise09a}). An energy stable scheme with higher order temporal accuracy has always been highly desirable because these processes are realized only in the sense of very large times.
 
 There have been several works focused on second-order-in-time schemes for the SS equation in recent years. In \cite{xu06}, the authors proposed a hybrid scheme, one which combined a second-order backward differentiation for the time derivative term and a second-order extrapolation for the explicit treatment of the nonlinear term. A linear stabilization parameter $A$ has to be sufficiently large to guarantee the energy dissipation law for this scheme, and a theoretical justification of the lower bound for $A$ has not been available. As an alternate approach, a second-order-in-time operator splitting scheme was proposed for the SS equation in \cite{cheng2015fast}, in which the nonlinear part is solved using the fourth-order central difference approximation combined with the third-order explicit Runge-Kutta method. The corresponding convergence analysis was provided in \cite{li2015convergence}. Similar operator splitting ideas can also be found in a recent work  \cite{lee2017second}. Some other second-order-in-time numerical approaches were reported in recent years, such as a linearized finite difference scheme in \cite{qiao2012stability}, an adaptive time-stepping strategy with Crank-€"Nicolson (CN) formulas in \cite{qiao2011adaptive}, the BDF and the CN formulas with invariant energy quadratization strategy proposed in \cite{yang2016numerical}.
 
Meanwhile, it is observed that, the long time energy stability could not be theoretically justified for these numerical works, due to the explicit treatment for the nonlinear terms. In the existing literature, the only second-order-in-time numerical algorithm for the SS equation (\ref{equation-SS}) with a long time energy stability could be found in \cite{shen2012second}, in which modified Crank-Nicolson approximations are used for the nonlinear 4-Laplacian term and the surface diffusion term, while an explicit extrapolation formula is applied to the concave diffusion term, respectively; in turn, an unconditional energy stability is derived as a result of a careful energy estimate. On the other hand, extensive numerical experiments have indicated a fairly poor performance for the modified Crank-Nicolson scheme, due to the highly nonlinear nature of the 4-Laplacian term, as well as the complicated form involved with the CN approximation.

Any numerical scheme that treats the nonlinear terms implicitly -- for the purposes of accuracy or stability, or both -- requires one to solve a regularized 4-Laplacian-type equation, where the highest-order term is a linear biharmonic operator. Consequently, an efficient solver for a regularized p-Laplacian equation has always been highly desirable. In a recent work~\cite{feng2016preconditioned}, a preconditioned steepest descent (PSD) algorithm was proposed for such problems. At each iteration stage, only a purely linear elliptic equation needs to be solved to obtain a search direction, and the numerical efficiency for such an elliptic equation could be greatly improved with the use of FFT-based solvers. In turn, an optimization in the given search direction becomes one-dimensional, with its well-posedness assured by convexity arguments. Moreover, a geometric convergence of such an iteration could be theoretically derived, so that a great improvement of the numerical efficiency is justified, in comparison with an application of the Polak-Ribi\'ere variant of NCG (nonlinear conjugate gradient) method \cite{polak69}, reported in \cite{shen2012second, wang10a}. 

The PSD algorithm has been very efficiently applied to the first order energy stable scheme for the SS equation, as reported in \cite{feng2016preconditioned}. However, its application to the CN version of the second order energy stable scheme, as proposed in~\cite{shen2012second}, has faced serious difficulties. These difficulties come from a subtle fact that, the modified CN approximation to the 4-Laplacian term does not correspond to a convex energy functional, because of the vector gradient form (other than a scalar form) in the 4-Laplacian expansion. Consequently, a natural question arises: could the PSD solver be efficiently combined with a second order energy stable scheme for the SS equation (\ref{equation-SS})? In this article, we propose a second order BDF scheme for the SS equation (\ref{equation-SS}), so that the unique solvability, energy stability could be theoretically derived, and the PSD solver could be efficiently applied. In more details, an alternate second order energy stable scheme is proposed, based on the 2nd order BDF temporal approximation framework, instead of that based on the CN one. The 2nd order BDF scheme treats and approximates every term at the time step $t^{k+1}$ (instead of the time instant $t^{k+1/2}$): a 2nd order BDF 3-point stencil is applied in the temporal derivative approximation, the nonlinear term and the surface diffusion terms are updated implicitly for their strong convexities, and a second order accurate, explicit extrapolation formula is applied in the approximation of the concave diffusion term. Such a structure makes the numerical scheme uniquely solvable. In addition, to ensure the energy stability of the numerical scheme, we need to add a second order Douglas-Dupont regularization, in the form of $-A \tau \Delta ( \phi^{k+1} - \phi^k )$. We prove that, under a mild requirement $A \ge \frac{1}{16}$, rigorous energy stability is guaranteed.

In fact, the 2nd order accurate, energy stable BDF scheme for the Cahn-Hilliard model was analyzed in a recent article~\cite{yan16a} with similar ideas. In particular, the nonlinear solver required for the BDF scheme is reported to require 20 to 25 percent less computational effort than that for the Crank-Nicolson version, due to the simpler form and stronger convexity properties of the nonlinear term. For the SS equation (\ref{equation-SS}), a much greater improvement (in terms of numerical efficiency) is expected for the BDF approach, due to the more complicated form of the 4-Laplacian term. Based on the unconditional energy stability, we derive an $L_h^\infty (0,T; H_{\rm per}^2)$ stability for the numerical solution. In turn, with the help of Sobolev embedding from $H_{\rm per}^2$ into $W^{1,6}$, we prove the convergence of the proposed scheme.

The remainder of the paper is organized as follows. In Section~\ref{sec:fdm}, we present the discrete spatial difference operators, function space, inner products and norms, define the proposed second-order-in-time fully discrete finite difference scheme and prove that the scheme is unconditionally stable and uniquely solvable, provide that the stabilized parameter $A\geq \nicefrac{1}{16}$. In Section~\ref{sec:convergence}, we provide a rigorous convergence analysis and error estimate for the proposed scheme. The preconditioned steepest descent solver and and preconditioned nonlinear conjugate gradient solver are outlined in Section~\ref{sec:psd} and Section~\ref{sec:pncg}, respectively. Finally, numerical experiments are presented Section~\ref{sec:num}, and some concluding remarks are given in Section~\ref{sec:conclusion}.
	\section{ The Fully Discrete Scheme with Finite Difference Spatial Discretization in 2D}
	\label{sec:fdm}
	\subsection{Notation}
	\label{sec:notation}	
In this subsection we define the discrete spatial difference operators, function space, inner products and norms, following the notation used in \cite{feng2016fch,feng2016preconditioned, shen2012second, wang10a, wang11, wise09a}.  Let $\Omega = (0,L_x)\times(0,L_y)$, where, for simplicity, we assume $L_x =L_y =: L > 0$. We write $L = m\cdot h$, where $m$ is a positive integer. The parameter $h = \frac{L}{m}$ is called the mesh or grid spacing. We define the following two uniform, infinite grids with grid spacing $h>0$:
	\[
E := \{ x_{i+\hf} \ |\ i\in {\mathbb{Z}}\}, \quad C := \{ x_i \ |\ i\in {\mathbb{Z}}\},
	\]
where $x_i = x(i) := (i-\hf)\cdot h$. Consider the following 2D discrete periodic function spaces: 
	\begin{eqnarray*}
{\mathcal V}_{\rm per} &:=& \left\{\nu: E\times E\rightarrow {\mathbb{R}}\ \middle| \ \nu_{i+\frac12,j+\frac12}= \nu_{i+\frac12+\alpha m,j+\frac12+\beta m}, \ \forall \, i,j,\alpha,\beta\in \mathbb{Z}  \right\},
	\\
{\mathcal C}_{\rm per} &:=& \left\{\nu: C\times C
\rightarrow {\mathbb{R}}\ \middle| \ \nu_{i,j} = \nu_{i+\alpha m,j+\beta m}, \ \forall \, i,j,\alpha,\beta\in \mathbb{Z} \right\},
	\\
{\mathcal E}^{\rm ew}_{\rm per} &:=& \left\{\nu: E\times C\rightarrow {\mathbb{R}}\ \middle| \ \nu_{i+\frac12,j}= \nu_{i+\frac12+\alpha m,j+\beta m}, \ \forall \,  i,j,\alpha,\beta\in \mathbb{Z}  \right\},
	\\
{\mathcal E}^{\rm ns}_{\rm per} &:=& \left\{\nu: C \times E\rightarrow {\mathbb{ R}}\ \middle| \ \nu_{i,j+\frac12}= \nu_{i+\alpha m,j+\frac12+\beta m}, \ \forall \, i,j,\alpha,\beta\in \mathbb{Z}  \right\}.
	\end{eqnarray*}		
The functions of ${\mathcal V}_{\rm per}$ are called {\emph{vertex centered functions}}; those of ${\mathcal C}_{\rm per}$ are called {\emph{cell centered functions}}. The functions of ${\mathcal E}^{\rm ew}_{\rm per}$ are called {\emph{east-west edge-centered functions}}, and the functions of ${\mathcal E}^{\rm ns}_{\rm per}$ are called {\emph{north-south edge-centered functions}}.  We also define the mean zero space 
	\[
\mathring{\mathcal C}_{\rm per}:=\left\{\nu\in {\mathcal C}_{\rm per} \ \middle| \  \frac{h^2}{| \Omega|} \sum_{i,j=1}^m \nu_{i,j} =: \overline{\nu}  = 0\right\} .
	\]

We now introduce the important difference and average operators on the spaces:  
	\begin{eqnarray*}
&& A_x \nu_{i+\hf,\Box} := \frac{1}{2}\left(\nu_{i+1,\Box} + \nu_{i,\Box} \right), \quad D_x \nu_{i+\hf,\Box} := \frac{1}{h}\left(\nu_{i+1,\Box} - \nu_{i,\Box} \right),\\
&& A_y \nu_{\Box,i+\hf} := \frac{1}{2}\left(\nu_{\Box,i+1} + \nu_{\Box,i} \right), \quad D_y \nu_{\Box,i+\hf} := \frac{1}{h}\left(\nu_{\Box,i+1} - \nu_{\Box,i} \right) , 
	\end{eqnarray*}
with $A_x,\, D_x: {\mathcal C}_{\rm per}\rightarrow{\mathcal E}_{\rm per}^{\rm ew}$ if $\Box$ is an integer, and $A_x,\, D_x: {\mathcal E}^{\rm ns}_{\rm per}\rightarrow{\mathcal V}_{\rm per}$ if $\Box$ is a half-integer, with $A_y,\, D_y: {\mathcal C}_{\rm per}\rightarrow{\mathcal E}_{\rm per}^{\rm ns}$ if $\Box$ is an integer, and $A_y,\, D_y: {\mathcal E}^{\rm ew}_{\rm per}\rightarrow{\mathcal V}_{\rm per}$ if $\Box$ is a half-integer. Likewise,
	\begin{eqnarray*}
&&a_x \nu_{i,\Box} := \frac{1}{2}\left(\nu_{i+\hf,\Box} + \nu_{i-\hf,\Box} \right),	 \quad d_x \nu_{i,\Box} := \frac{1}{h}\left(\nu_{i+\hf,\Box} - \nu_{i-\hf,\Box} \right),
	\\
&&a_y \nu_{\Box,j} := \frac{1}{2}\left(\nu_{\Box,j+\hf} + \nu_{\Box,j-\hf} \right),	 \quad d_y \nu_{\Box,j} := \frac{1}{h}\left(\nu_{\Box,j+\hf} - \nu_{\Box,j-\hf} \right),
	\end{eqnarray*}
with $a_x,\, d_x : {\mathcal E}_{\rm per}^{\rm ew}\rightarrow{\mathcal C}_{\rm per}$ if $\Box$ is an integer, and $a_x,\ d_x: {\mathcal V}_{\rm per}\rightarrow{\mathcal E}^{\rm ns}_{\rm per}$ if $\Box$ is a half-integer; and with $a_y,\, d_y : {\mathcal E}_{\rm per}^{\rm ns}\rightarrow{\mathcal C}_{\rm per}$ if $\Box$ is an integer, and $a_y,\ d_y: {\mathcal V}_{\rm per}\rightarrow{\mathcal E}^{\rm ew}_{\rm per}$ if $\Box$ is a half-integer.
    
Also define the 2D center-to-vertex derivatives $\mD_x,\, \mD_y : {\mathcal C}_{\rm per}\rightarrow{\mathcal V}_{\rm per}$ component-wise as
	\begin{eqnarray*}
\mD_x \nu_{i+\hf,j+\hf} &:=& A_y(D_x\nu)_{i+\hf,j+\hf} = D_x(A_y\nu)_{i+\hf,j+\hf}
	\nonumber
	\\
&=& \frac{1}{2h}\left(\nu_{i+1,j+1}-\nu_{i,j+1}+\nu_{i+1,j}-\nu_{i,j} \right) ,
	\\
\mD_y\nu_{i+\hf,j+\hf} &:=& A_x(D_y\nu)_{i+\hf,j+\hf} = D_y(A_x\nu)_{i+\hf,j+\hf} 
	\nonumber
	\\
&=& \frac{1}{2h}\left(\nu_{i+1,j+1}-\nu_{i+1,j}+\nu_{i,j+1}-\nu_{i,j} \right) . 
	\end{eqnarray*}		

The utility of these definitions is that the differences $\mD_x$ and $\mD_y$ are collocated on the grid, unlike $D_x$, $D_y$. We denote the 2D vertex-to-center derivatives $\md_x,\, \md_y  : {\mathcal V}_{\rm per}\rightarrow{\mathcal C}_{\rm per}$  component-wise as
	\begin{eqnarray*}
\md_x \nu_{i,j} &:=& a_y(d_x \nu)_{i,j} = d_x(a_y \nu)_{i,j} 
	\nonumber
	\\
&=& \frac{1}{2h}\left(\nu_{i+\hf,j+\hf}- \nu_{i-\hf,j+\hf}+\nu_{i+\hf,j-\hf}-\nu_{i-\hf,j-\hf}\right) ,
	\\
\md_y \nu_{i,j} &:=& a_x(d_y \nu)_{i,j} = d_y(a_x \nu)_{i,j}
	\nonumber
	\\
&=& \frac{1}{2h}\left(\nu_{i+\hf,j+\hf}- \nu_{i+\hf,j-\hf}+\nu_{i-\hf,j+\hf}-\nu_{i-\hf,j-\hf}\right).
	\end{eqnarray*}

In turn, the discrete gradient operator, $\nabla^\msfv_h$: ${\mathcal C}_{\rm per}\rightarrow {\mathcal V}_{\rm per}\times {\mathcal V}_{\rm per}$, is defined as 
	\[
\nabla^\msfv_h \nu_{i+\hf,j+\hf} := (\mD_x \nu_{i+\hf,j+\hf}, \mD_y \nu_{i+\hf,j+\hf}). 
	\]	
The standard 2D discrete Laplacian, $\Delta_h : {\mathcal C}_{\rm per}\rightarrow{\mathcal C}_{\rm per}$, is given by 
	\[
\Delta_h \nu_{i,j} := d_x(D_x \nu)_{i,j} + d_y(D_y \nu)_{i,j} = \frac{1}{h^2}\left( \nu_{i+1,j}+\nu_{i-1,j}+\nu_{i,j+1}+\nu_{i,j-1} - 4\nu_{i,j}\right).
	\]
The 2D vertex-to-center average, $\mathcal{A} : {\mathcal V}_{\rm per}\rightarrow{\mathcal C}_{\rm per}$, is defined  to be 
	\[
\mathcal{A} \nu_{i,j} := \frac{1}{4}\left( \nu_{i+1,j}+\nu_{i-1,j}+\nu_{i,j+1}+\nu_{i,j-1}\right).
	\]		
The 2D \emph{skew} Laplacian, $\Delta^\msfv_h : {\mathcal C}_{\rm per}\rightarrow{\mathcal C}_{\rm per}$, is defined as
	\begin{eqnarray*}
\Delta^\msfv_{h}\nu_{i,j} &=& \md_x(\mD_x\nu)_{i,j} + \md_y( \mD_y\nu)_{i,j} 
	\nonumber
	\\
&=& \frac{1}{2h^2}\left(\nu_{i+1,j+1}+\nu_{i-1,j+1}+\nu_{i+1,j-1}+\nu_{i-1,j-1} - 4\nu_{i,j}\right) .
	\end{eqnarray*}	
	

For $p\ge 2$, the 2D discrete p-Laplacian operator is defined as  
	\begin{eqnarray*}
\nabla_h^\msfv \cdot \left( \left| \nabla_h^\msfv \nu\right|^{p-2} \nabla_h^\msfv \nu \right)_{ij} := \md_x(r\, \mD_x\nu )_{i,j}+ \md_y(r\, \mD_y\nu )_{i,j},
	\end{eqnarray*}	
with 
	\[
r_{i+\frac{1}{2},j+\frac{1}{2}}:=\left[(\mD_x u )_{i+\frac{1}{2},j+\frac{1}{2}}^2+(\mD_y u )_{i+\frac{1}{2},j+\frac{1}{2}}^2\right]^{\frac{p-2}{2}}.
	\]	
Clearly, for $p=2$, $\Delta^\msfv_{h}\nu = \nabla_h^\msfv \cdot \left( \left| \nabla_h^\msfv \nu\right|^{p-2} \nabla_h^\msfv \nu \right)$.
	
Now we are ready to introduce the following grid inner products:  
	\begin{eqnarray*}
\ciptwo{\nu}{\xi}_2 &:=& h^2\sum_{i=1}^m\sum_{j=1}^n \nu_{i,j}\psi_{i,j},\quad \nu,\, \xi\in {\mathcal C}_{\rm per},
\\
\viptwo{\nu}{\xi} &:=& \ciptwo{\mathcal{A}(\nu\xi)}{1}_2 ,\quad \nu,\, \xi\in{\mathcal V}_{\rm per},
\\
\eipew{\nu}{\xi} &:=& \ciptwo{A_x(\nu\xi)}{1}_2 ,\quad \nu,\, \xi\in{\mathcal E}^{\rm ew}_{\rm per},
\\
\eipns{\nu}{\xi} &:=& \ciptwo{A_y(\nu\xi)}{1}_2 ,\quad \nu,\, \xi\in{\mathcal E}^{\rm ns}_{\rm per}.
	\end{eqnarray*}	

We now define the following norms for cell-centered functions. If $\nu\in {\mathcal C}_{\rm per}$, then $\nrm{\nu}_2^2 := \ciptwo{\nu}{\nu}_{2}$; $\nrm{\nu}_p^p := \ciptwo{|\nu|^p}{1}_{2}$ ($1\le p< \infty$), and $\nrm{\nu}_\infty := \max_{1\le i\le m \atop 1\le j\le n}\left|\nu_{i,j}\right|$.
Similarly, we define the gradient norms: for $\nu\in{\mathcal C}_{\rm per}$,
	\[
\nrm{\nabh^\msfv\nu}_p^p := \langle |\nabla_h^\msfv\nu|^p, 1\rangle, \quad |\nabh^\msfv\nu|^p:=[(\mD_x\nu)^2 +(\mD_y\nu)^2]^{\frac{p}{2}} = \left[\nabla_h^\msfv\nu\cdot\nabla_h^\msfv\nu  \right]^{\frac{p}{2}}  \in \mathcal{V}_{\rm per}, \quad 2\le p < \infty, 
	\]
and
	\[
\nrm{ \nabla_h \nu}_2^2 : = \eipew{D_x\nu}{D_x\nu} + \eipns{D_y\nu}{D_y\nu} .
	\]
Consequently, the discrete $\nrm{ \, \cdot \, }_{H_{h}^1}$ and $\nrm { \, \cdot \, }_{H_{h}^2}$  norms on periodic boundary domain defined as 
	\begin{eqnarray}
\nrm{ \phi }_{H_{h}^1}^2 &:=& \nrm{ \phi }_2^2 + \nrm{ \nabla_h \phi }_2^2 ,
	\label{discrete-H1-norm} 
	\\
\nrm{ \phi }_{H_{h}^2}^2 &:=& \nrm{ \phi }_{H_{h}^1}^2 + \nrm{ \Delta_h \phi }_2^2.
	\label{discrete-H2-norm} 
	\end{eqnarray}	

\begin{lem}\label{lem:weak}
For any $\phi \in {\mathcal C}_{\rm per}$, we have 
\begin{equation} 
   \| \nabla_h \phi  \|_2^2 \ge \| \nabla^\msfv_h \phi \|_2^2. 
\end{equation}   
\end{lem}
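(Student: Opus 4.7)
The plan is to expand both norms directly in terms of the edge and vertex values, then reduce the inequality to a pointwise Cauchy--Schwarz estimate plus a reindexing via periodicity. The key observation is that the vertex derivatives factor as averages of the edge derivatives: $\mD_x = A_y D_x$ and $\mD_y = A_x D_y$ by the very definitions in the paper. Thus at each vertex $(i+\hf,j+\hf)$,
\begin{equation*}
\mD_x \phi_{i+\hf,j+\hf} = \tfrac{1}{2}\bigl(D_x\phi_{i+\hf,j+1} + D_x\phi_{i+\hf,j}\bigr), \qquad \mD_y\phi_{i+\hf,j+\hf} = \tfrac{1}{2}\bigl(D_y\phi_{i+1,j+\hf} + D_y\phi_{i,j+\hf}\bigr).
\end{equation*}

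First, I would rewrite each inner product as a plain, unweighted sum over its natural grid. Using periodicity, the vertex inner product collapses to $\viptwo{\nu}{\nu} = h^2\sum_{i,j}\nu_{i+\hf,j+\hf}^2$, and the edge inner products collapse similarly to $\eipew{D_x\phi}{D_x\phi} = h^2\sum_{i,j}(D_x\phi_{i+\hf,j})^2$ and analogously in the $y$-direction. This eliminates the averaging weights in the definitions of $\viptwo{\cdot}{\cdot}$, $\eipew{\cdot}{\cdot}$, $\eipns{\cdot}{\cdot}$ and puts both sides of the claimed inequality on the same footing.

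Next, I would apply the elementary inequality $\bigl(\tfrac{a+b}{2}\bigr)^2 \le \tfrac{1}{2}(a^2+b^2)$ to each vertex term, obtaining
\begin{equation*}
(\mD_x\phi_{i+\hf,j+\hf})^2 \le \tfrac{1}{2}\bigl[(D_x\phi_{i+\hf,j+1})^2 + (D_x\phi_{i+\hf,j})^2\bigr],
\end{equation*}
and the analogous bound for the $y$-component. Summing over $i,j$ and using the periodicity shift (the sums of $(D_x\phi_{i+\hf,j+1})^2$ and $(D_x\phi_{i+\hf,j})^2$ over all $i,j$ are equal) gives $\sum_{i,j}(\mD_x\phi_{i+\hf,j+\hf})^2 \le \sum_{i,j}(D_x\phi_{i+\hf,j})^2$, and similarly for the $y$-term. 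Adding these and multiplying by $h^2$ yields $\nrm{\nabla_h^\msfv\phi}_2^2 \le \nrm{\nabla_h\phi}_2^2$.

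There is no serious obstacle here; the only subtlety is being careful with the bookkeeping when collapsing $\mathcal{A}$, $A_x$, $A_y$ inside the three different inner products into plain unweighted sums, and then shifting indices using the periodic boundary condition so that both shifted copies of the edge-derivative sum equal the desired edge norm.
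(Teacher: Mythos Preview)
Your proposal is correct and follows essentially the same route as the paper: write $\mD_x\phi$ (resp.\ $\mD_y\phi$) as the two-point average of $D_x\phi$ (resp.\ $D_y\phi$), apply the convexity inequality $\bigl(\tfrac{a+b}{2}\bigr)^2\le\tfrac12(a^2+b^2)$, and reindex using periodicity to conclude $\|\mD_x\phi\|_2\le\|D_x\phi\|_2$ and $\|\mD_y\phi\|_2\le\|D_y\phi\|_2$. The paper's proof is simply a terser version of exactly this argument.
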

\begin{proof}
  By the definition of $\mD_x \phi$, we get 
\begin{eqnarray} 
  \mD_x \phi_{i+\hf,j+\hf} = \frac12 \left( (D_x \phi)_{i+\hf,j} + (D_x \phi)_{i+\hf,j+1} \right) , 
  \label{lemma 2.1-1} 
\end{eqnarray} 
which in turn implies that 
\begin{eqnarray} 
  \| \mD_x \phi \|_2^2 := h^2 \sum_{i,j=0}^{m-1}  ( \mD_x \phi_{i+\hf,j+\hf} )^2 
  \le h^2 \sum_{i,j=0}^{m-1}  ( D_x \phi_{i+\hf,j} )^2, 
\end{eqnarray}   
i.e.
\begin{eqnarray} 
\| \mD_x \phi \|_2 \le  \| D_x \phi \|_2.  \label{lemma 2.1-2} 
\end{eqnarray}    
Likewise,  we can also obtain $\| \mD_y \phi \|_2 \le  \| D_y \phi \|_2$. These two inequalities lead to the desired estimate; the proof of Lemma.~\ref{lem:weak} is complete. 
\end{proof}
	
The following preliminary estimates are needed in the convergence analysis presented in later sections; the detailed proof is left to Appendix~\ref{appen:A}.

\begin{prop}\label{prop:1}
For any $\phi \in {\mathcal C}_{\rm per}$ with $\overline{\phi}=0$, we have 
\begin{eqnarray}
  &&
 \nrm{\Delta_h \phi}_2^2  \ge C_1 \| \phi \|_{H_{h}^2}^2 ,  \label{prop 1-0-1} 
\\
  &&
 \nrm{\phi}_\infty \le C \| \phi \|_{H_{h}^2}  ,   \label{prop 1-0-2} 
\\
  &&  
 \nrm{\phi}_{W_h^{1,6}} := \| \phi \|_6 + \| \nabla_h^\msfv \phi \|_6 \le C \| \phi \|_{H_{h}^2} ,  \label{prop 1-0-3} 
\end{eqnarray}
with $C$ and $C_1$ only dependent on $\Omega$. 
\end{prop}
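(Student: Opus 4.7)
The plan is to exploit discrete Fourier analysis on the periodic grid to reduce each of the three estimates to a Parseval-type computation. Expanding $\phi \in \mathcal{C}_{\rm per}$ as a discrete Fourier series on the $m \times m$ periodic grid, the standard cell-centered Laplacian $\Delta_h$ acts diagonally with symbol
\begin{equation*}
\lambda_{k,\ell} = -\frac{4}{h^2}\Bigl(\sin^2(\pi k h / L) + \sin^2(\pi \ell h / L)\Bigr),
\end{equation*}
and the operators $D_x, D_y, \mD_x, \mD_y$ are diagonalized by analogous trigonometric symbols of order $|k|$. In this basis every discrete norm appearing in the proposition becomes a weighted $\ell^2$-sum of $|\hat{\phi}_{k,\ell}|^2$, so all three estimates reduce to elementary comparisons between the corresponding weights.

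For (\ref{prop 1-0-1}), the mean-zero hypothesis $\overline{\phi}=0$ removes the $(0,0)$ mode, so $|\lambda_{k,\ell}|$ is bounded away from zero uniformly in $h$ on the remaining frequencies. Combined with the summation-by-parts identity $\|\nabla_h \phi\|_2^2 = \ciptwo{-\Delta_h \phi}{\phi}_2$, this yields the two discrete Poincar\'e-type bounds $\|\phi\|_2^2 \le C \|\nabla_h \phi\|_2^2$ and $\|\nabla_h \phi\|_2^2 \le C \|\Delta_h \phi\|_2^2$; then (\ref{prop 1-0-1}) follows immediately from the definition (\ref{discrete-H2-norm}) of the $H_h^2$-norm.

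For the two embedding estimates (\ref{prop 1-0-2}) and (\ref{prop 1-0-3}), I would combine Fourier inversion with a Cauchy--Schwarz weighted by $(1+\lambda_{k,\ell}^2)^{-1}$:
\begin{equation*}
\|\phi\|_\infty \le \sum_{k,\ell} |\hat{\phi}_{k,\ell}| \le \Bigl(\sum_{k,\ell}(1+\lambda_{k,\ell}^2)\,|\hat{\phi}_{k,\ell}|^2\Bigr)^{1/2}\Bigl(\sum_{k,\ell}(1+\lambda_{k,\ell}^2)^{-1}\Bigr)^{1/2}.
\end{equation*}
Parseval identifies the first factor with a constant multiple of $\|\phi\|_{H_h^2}$, and after the trigonometric comparison $\tfrac{4}{h^2}\sin^2(\pi k h/L) \gtrsim |k|^2$ on the first Brillouin zone the second factor is a Riemann sum for the convergent two-dimensional integral $\iint (1+|\xi|^4)^{-1}\,d\xi$, hence is uniformly bounded in $h$. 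The same weighted-Cauchy--Schwarz strategy, applied after first multiplying $\hat{\phi}_{k,\ell}$ by the Fourier symbols associated with $\mD_x, \mD_y$, bounds $\|\nabla_h^\msfv \phi\|_p$ for any finite $p$ in two dimensions and therefore delivers (\ref{prop 1-0-3}); for the $p=6$ case one can alternatively appeal to a discrete Gagliardo--Nirenberg interpolation between $\|\nabla_h^\msfv \phi\|_2$ and $\|\Delta_h \phi\|_2$.

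The main obstacle is keeping all constants independent of $h$. This hinges on two technical comparisons best relegated to Appendix~\ref{appen:A}: first, that the discrete symbol $\tfrac{4}{h^2}\sin^2(\pi k h/L)$ is comparable to $|k|^2$ uniformly over the admissible frequency range; and second, that the Riemann sums arising from the truncated Fourier series are controlled by their continuous counterparts, which converge thanks to the continuous embeddings $H^2 \hookrightarrow L^\infty$ and $H^2 \hookrightarrow W^{1,p}$ for $p<\infty$ in two space dimensions. Once these two comparisons are established, the entire proposition follows from the unified Parseval-plus-Cauchy--Schwarz framework sketched above.
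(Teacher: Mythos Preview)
Your Fourier-coefficient approach to (\ref{prop 1-0-1}) and (\ref{prop 1-0-2}) is sound and parallels the paper's argument (the paper routes through the continuous trigonometric extension $\phi_{\bf F}$ and the continuous embeddings rather than staying purely on the symbol side, but for these two estimates the difference is cosmetic). The gap is in (\ref{prop 1-0-3}). Your weighted Cauchy--Schwarz argument, taken literally, controls only $\|\nabla_h^\msfv\phi\|_\infty$: after inserting the symbol $\mu_{k,\ell}$ of $\mD_x$ you would need
\[
\sum_{k,\ell}\frac{|\mu_{k,\ell}|^2}{1+\lambda_{k,\ell}^2}\ \sim\ \sum_{k,\ell}\frac{k^2}{1+(k^2+\ell^2)^2}
\]
to be bounded uniformly in $h$. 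It is not --- the sum diverges logarithmically in the number of modes, which is exactly the discrete shadow of the failure of $H^2\hookrightarrow W^{1,\infty}$ in two dimensions. So the sentence ``the same weighted Cauchy--Schwarz strategy \ldots\ bounds $\|\nabla_h^\msfv\phi\|_p$ for any finite $p$'' is not justified, and the fallback you offer (a discrete Gagliardo--Nirenberg inequality between $\|\nabla_h^\msfv\phi\|_2$ and $\|\Delta_h\phi\|_2$) is essentially the assertion to be proved.

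The paper supplies the missing idea via an aliasing estimate (Lemma~\ref{lem:A.2}): for $f=\mD_x\phi$ with trigonometric extension $f_{\bf F}$ one has $\|f\|_p\le\sqrt{p/2}\,\|f_{\bf F}\|_{L^p}$ for $p=4,6$, proved by comparing the discrete Fourier coefficients of the grid function $f^{p/2}$ with those of $(f_{\bf F})^{p/2}$ and bounding the aliasing overlap by a fixed combinatorial factor. One then applies the \emph{continuous} Sobolev inequality $\|f_{\bf F}\|_{L^6}\le C\|f_{\bf F}\|_{H^1}$ and returns to the discrete side via the symbol comparisons of Lemma~\ref{lem:A.1} (in particular $\|\partial_x f_{\bf F}\|\le\|\partial_x^2\phi_{\bf F}\|\le C\|\Delta\phi_{\bf F}\|\le C\|\Delta_h\phi\|_2$). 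This passage through the continuous $L^6$ norm of the extension --- not an $\ell^\infty$ bound on the coefficients --- is the step your sketch is missing.
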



	\subsection{The fully discrete scheme}
	\label{sec:scheme}
Let $M \in\mathbb{Z}^+$, and set $\dt:=T/M$, where $T$ is the final time. We define the canonical grid projection operator $\P: C^0(\Omega)\to {\mathcal C}_{\rm per}$ via $[\P v]_{i,j}=v(\xi_i,\xi_j)$. Set $u_{h,s}:=\P u(\cdot, \dt)$. Then $F_h(u_{h,\dt}) + \frac12 \| \nabla_h ( u_{h,\dt} - u_{h,0} ) \|_2^2 \to F(u(\cdot, 0))$ as $h\to 0$ and $s\to 0$ for sufficiently regular $u$. We denote $\phi_{e}$ as the exact solution to the SS equation \eqref{equation-SS} and take $\Phi_{i,j}^\ell = \P\phi_e (\cdot, t_\ell)$. In the rest of paper, we shall drop the subscription $i,j$ if no confusion is caused.

With the machinery in last subsection, our second-order-in-time BDF type scheme can be formulated as follows:  for $k\geq 1$, given $\phi^{k-1},\phi^k\in {\mathcal C}_{\rm per}$, find $\phi^{k+1}\in {\mathcal C}_{\rm per}$ such that
\begin{eqnarray} 
   \frac{3 \phi^{k+1} - 4 \phi^k + \phi^{k-1}}{2 \dt} 
  &=& \nabla_h^\msfv \cdot ( | \nabla_h^\msfv \phi^{k+1} |^2 \nabla_h^\msfv \phi^{k+1} ) -  \Delta_h^\msfv ( 2 \phi^k - \phi^{k-1} )  \nonumber 
\\
  && 
   - A \dt \Delta_h^2 (\phi^{k+1} - \phi^k)  - \varepsilon^2 \Delta_h^2 \phi^{k+1},
    \label{scheme-BDF} 
\end{eqnarray}
where $\phi^{0}:=\Phi^0$, $\phi^1 := \Phi^1$ and $A$ is the constant stability coefficient. 


For the SS equation \eqref{equation-SS}, we see that the PDE is equivalent if a fixed constant is added or subtracted from the solution. Similar argument could also be applied to the numerical scheme \eqref{scheme-BDF}, since this scheme is mass conservative at a discrete level. For simplicity of presentation, we assume that $\overline{\phi^0} = \overline{\phi^1} = 0$, so that $\overline{\phi^k} = 0$, for any $k \ge 2$.

We now introduce a discrete energy that is consistent with the continuous space energy \eqref{energy-SS-1} as $h\to 0$. In particular, the discrete energy $F_h: {\mathcal C}_{\rm per} \to \mathbb{R}$ is defined as:
 \begin{eqnarray} 
   F_h (\phi) = \frac{1}{4} \nrm{\nabla_h^\msfv\phi}_4^4 
   - \frac12 \nrm{ \nabla_h^\msfv \phi }_2^2+\frac12 \varepsilon^2 \nrm{\Delta_h\phi }_2^2.
   \label{dis-energy-SS} 
 \end{eqnarray} 
\begin{rmk}
We note that $\nrm{\nabla^\msfv_h \phi}_p =0$ does not imply that $\phi$ is a constant. (A checkerboard function has norm zero.) This defect of the skew stencil is not a concern in the present context since the highest order norm in the energy uses a standard stencil.
\end{rmk}

We also denote a modified numerical energy $\tilde{F}_h:{\mathcal C}_{\rm per} \to \mathbb{R} $ via 
 \begin{eqnarray} 
   \tilde{F}_h (\phi, \psi) := F_h (\phi) 
   + \frac{1}{4 \dt} \nrm{ \phi - \psi }_2^2 
   + \frac12 \nrm{ \nabla_h ( \phi - \psi ) }_2^2.  
   \label{mod-energy-SS-1} 
 \end{eqnarray} 
Although we can not guarantee that the energy $F_h$ is non-increasing in time,  we are able to prove the dissipation of auxiliary energy $\tilde{F}_h$.  The unique solvability  and 
the unconditional energy stability of scheme \eqref{scheme-BDF}  is assured by the following theorem.

\begin{thm}
Suppose that the exact solution $\phi_e$ is periodic and sufficiently regular, and $\phi^0, \phi^1 \in {\mathcal C}_{\rm per}$ is obtained via grid projection, as defined above.  Given any $(\phi^{k-1}, \phi^k) \in {\mathcal C}_{\rm per}$, there is a unique solution $\phi^{k+1} \in {\mathcal C}_{\rm per}$ to the scheme \eqref{scheme-BDF}. And also, the scheme \eqref{scheme-BDF}, with starting values $\phi^{0}$ and $\phi^1$, is unconditionally energy stable, \emph{i.e.}, for any $\tau > 0$ and $h>0$, and any positive integer $2\le k \le M-1$, The numerical scheme \eqref{scheme-BDF} has the following energy-decay property: 
\begin{eqnarray} 
    \tilde{F}_h (\phi^{k+1}, \phi^k) \le \tilde{F}_h(\phi^k, \phi^{k-1})\le   \tilde{F}_h(\phi^1, \phi^{0})\leq C_0, 
   \label{scheme-BDF-stability}    
\end{eqnarray}
for all $A\geq \frac{1}{16}$, where $C_0>$ is a constant independent of $s$, $h$ and $T$.
\end{thm}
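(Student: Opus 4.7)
For unique solvability, I would rewrite scheme \eqref{scheme-BDF} as the Euler--Lagrange equation of the strictly convex functional
\begin{equation*}
J(\phi) := \frac{3}{4\dt}\nrm{\phi}_2^2 - \frac{1}{\dt}\cip{\phi}{2\phi^k - \tfrac12\phi^{k-1}}_2 + \frac14 \nrm{\nabla_h^\msfv \phi}_4^4 - \cip{\nabla_h^\msfv(2\phi^k - \phi^{k-1})}{\nabla_h^\msfv \phi}_2 + \frac{A\dt}{2}\nrm{\Delta_h(\phi - \phi^k)}_2^2 + \frac{\varepsilon^2}{2}\nrm{\Delta_h \phi}_2^2
\end{equation*}
on the (finite-dimensional) cell-centered space. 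The pure quadratic $\frac{3}{4\dt}\nrm{\phi}_2^2$ gives strict convexity; $\frac14\nrm{\nabla_h^\msfv \phi}_4^4$ and the Laplacian-squared terms are convex as convex scalar functions composed with linear operators; the remaining terms are linear or constant. Hence $J$ admits a unique minimizer, which solves \eqref{scheme-BDF}, and the natural summation-by-parts structure of the scheme guarantees that $\overline{\phi^{k+1}} = 0$.

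For energy stability, I would take the $\cip{\cdot}{\cdot}_2$-inner product of \eqref{scheme-BDF} against $(\phi^{k+1} - \phi^k)$. On the left, the algebraic identity
\begin{equation*}
\cip{3a - 4b + c}{a - b}_2 = \tfrac52 \nrm{a-b}_2^2 - \tfrac12 \nrm{b-c}_2^2 + \tfrac12 \nrm{a - 2b + c}_2^2
\end{equation*}
(with $a = \phi^{k+1}$, $b = \phi^k$, $c = \phi^{k-1}$) immediately produces, after dropping the nonnegative last term, the $\frac{1}{4\dt}$-telescope that is built into $\tilde F_h$ together with an extra reservoir $\frac{1}{\dt}\nrm{\phi^{k+1}-\phi^k}_2^2$. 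On the right-hand side, convexity of $\xi \mapsto \frac14 |\xi|^4$ handles the implicit 4-Laplacian as a clean telescope, polarization handles $\varepsilon^2 \Delta_h^2 \phi^{k+1}$ and contributes a negative defect $-\frac{\varepsilon^2}{2}\nrm{\Delta_h(\phi^{k+1}-\phi^k)}_2^2$, and the Douglas--Dupont piece supplies $-A\dt\nrm{\Delta_h(\phi^{k+1}-\phi^k)}_2^2$. The extrapolation splits via $2\phi^k - \phi^{k-1} = \phi^k + (\phi^k - \phi^{k-1})$ into a polarization piece (which recovers the concave part of $F_h$ plus a negative defect $-\frac12\nrm{\nabla_h^\msfv(\phi^{k+1}-\phi^k)}_2^2$) and a cross term $\cip{\nabla_h^\msfv(\phi^k - \phi^{k-1})}{\nabla_h^\msfv(\phi^{k+1}-\phi^k)}_2$, which I would bound by Young's inequality and then upgrade via Lemma~\ref{lem:weak} from $\nabla_h^\msfv$ to the standard $\nabla_h$ so that it matches the gradient-difference piece of $\tilde F_h$.

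After assembling all pieces, the desired inequality $\tilde F_h(\phi^{k+1}, \phi^k) \le \tilde F_h(\phi^k, \phi^{k-1})$ collapses to the sharp point-wise estimate
\begin{equation*}
\tfrac12 \nrm{\nabla_h(\phi^{k+1} - \phi^k)}_2^2 \le \tfrac{1}{\dt}\nrm{\phi^{k+1} - \phi^k}_2^2 + A\dt\nrm{\Delta_h(\phi^{k+1}-\phi^k)}_2^2 .
\end{equation*}
This is the crux of the proof and exactly where the lower bound on $A$ enters. Starting from $\nrm{\nabla_h \psi}_2^2 = -\cip{\psi}{\Delta_h \psi}_2 \le \frac{1}{2\alpha}\nrm{\psi}_2^2 + \frac{\alpha}{2}\nrm{\Delta_h \psi}_2^2$ for arbitrary $\alpha > 0$, coefficient matching demands $\frac{1}{4\alpha} \le \frac{1}{\dt}$ and $\frac{\alpha}{4} \le A\dt$ simultaneously, which is solvable in $\alpha$ if and only if $A \ge \frac{1}{16}$. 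The main obstacle I anticipate is the careful accounting so that every gradient-difference or Laplacian-difference remainder spawned by the convexity/polarization steps lands in exactly the right sink; in particular, the $\frac12 \nrm{\nabla_h(\cdot)}_2^2$ piece of $\tilde F_h$ is \emph{forced} on us by the cross term arising from the explicit extrapolation, and $A = \tfrac{1}{16}$ is the sharp threshold for this specific modified energy. Iterating the per-step dissipation over $k$ then yields $\tilde F_h(\phi^{k+1}, \phi^k) \le \tilde F_h(\phi^1, \phi^0)$, and the absolute initial bound $\tilde F_h(\phi^1, \phi^0) \le C_0$ follows from $\nrm{\phi^1 - \phi^0}_2,\,\nrm{\nabla_h(\phi^1 - \phi^0)}_2 = O(\dt)$ for grid projections of a sufficiently regular exact solution together with uniform boundedness of $F_h(\phi^1)$.
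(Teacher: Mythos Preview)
Your proof is correct and follows essentially the same route as the paper's: test \eqref{scheme-BDF} against $\phi^{k+1}-\phi^k$, handle the BDF term, 4-Laplacian, biharmonic, stabilizer, and extrapolation exactly as you describe, and reduce to the key inequality $\tfrac12\nrm{\nabla_h(\phi^{k+1}-\phi^k)}_2^2 \le \tfrac{1}{\dt}\nrm{\phi^{k+1}-\phi^k}_2^2 + A\dt\nrm{\Delta_h(\phi^{k+1}-\phi^k)}_2^2$, which the paper obtains via AM--GM (your parametrized Young is equivalent) and which forces $A\ge\tfrac{1}{16}$. One small slip: the polarization of the concave piece produces a \emph{positive} remainder $+\tfrac12\nrm{\nabla_h^\msfv(\phi^{k+1}-\phi^k)}_2^2$, not a negative one; it then exactly cancels one half of the Young bound on the cross term, which is why only $-\tfrac12\nrm{\nabla_h(\phi^k-\phi^{k-1})}_2^2$ survives---but since you land on the correct reduced inequality this does not affect your argument.
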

\begin{proof}
The unique solvability follows from the convexity argument.
Taking an inner product with (\ref{scheme-BDF}) by 
$\phi^{k+1} - \phi^k$ yields
\begin{eqnarray} 
 0&=& \left(\frac{3 \phi^{k+1} - 4 \phi^k + \phi^{k-1}}{2 \dt}, \phi^{k+1} - \phi^k\right) \nonumber \\
 && -\bigg( \nabla_h^\msfv \cdot ( | \nabla_h^\msfv \phi^{k+1} |^2 \nabla_h^\msfv \phi^{k+1} ),\phi^{k+1} - \phi^k \bigg)+ \bigg( \Delta_h^\msfv ( 2 \phi^k - \phi^{k-1} ),\phi^{k+1} - \phi^k \bigg) \nonumber \\ 
   && + A \dt \bigg(\Delta_h^2 (\phi^{k+1} - \phi^k),\phi^{k+1} - \phi^k \bigg)  + \varepsilon^2 \bigg(\Delta_h^2 \phi^{k+1},\phi^{k+1} - \phi^k
    \bigg) \nonumber \\
 &:=& I_1+I_2+I_3+I_4+I_5 .  
    \label{scheme-BDF-inner} 
\end{eqnarray} 
We now establish the estimates for $I_1, \cdots, I_5$. The temporal difference term could be evaluated as follows
\begin{eqnarray}
  \left(  \frac{3 \phi^{k+1} - 4 \phi^k + \phi^{k-1}}{2 \dt} , 
  \phi^{k+1} - \phi^k  \right)  
  \ge \frac{1}{\dt}  \left(  \frac54 \nrm{ \phi^{k+1} - \phi^k }_2^2 
  - \frac14 \nrm{ \phi^k - \phi^{k-1} }_2^2  \right). 
    \label{scheme-BDF-stability-1} 
\end{eqnarray}
For the $4$-Laplacian term, we have 
\begin{eqnarray}
   \left(  - \nabla_h^\msfv \cdot ( | \nabla_h^\msfv \phi^{k+1} |^2 \nabla_h^\msfv \phi^{k+1} )  , 
  \phi^{k+1} - \phi^k  \right)  
  &=& \left(  | \nabla_h^\msfv \phi^{k+1} |^2 \nabla_h^\msfv \phi^{k+1} ,  
  \nabla_h^\msfv ( \phi^{k+1} - \phi^k  )  \right)   \nonumber 
\\ 
  &\ge& \frac14 \left(\| \nabla_h^\msfv \phi^{k+1} \|_{4}^4 
  - \| \nabla_h^\msfv \phi^k \|_{4}^4 \right).
    \label{scheme-BDF-stability-2} 
\end{eqnarray}


For the concave diffusive term, the following estimate is valid
\begin{eqnarray}
 &&\left(  \Delta_h^\msfv ( 2 \phi^k - \phi^{k-1})  , \phi^{k+1} - \phi^k \right)  
  = - \left(  \nabla_h^\msfv ( 2 \phi^k - \phi^{k-1}  ) , 
  \nabla_h^\msfv ( \phi^{k+1} - \phi^k)  \right)   \nonumber 
\\
&=&- \left(  \nabla_h^\msfv \phi^k , 
  \nabla_h^\msfv ( \phi^{k+1} - \phi^k)  \right)  
  - \left(  \nabla_h^\msfv (\phi^k - \phi^{k-1}  ) , 
  \nabla_h^\msfv ( \phi^{k+1} - \phi^k)  \right)   \nonumber
\\
&=&-\frac12\| \nabla^\msfv_h \phi^{k+1} \|_2^2 +\frac12\| \nabla^\msfv_h \phi^{k} \|_2^2 +\frac12\| \nabla^\msfv_h (\phi^{k+1}-\phi^k) \|_2^2 
  - \left(  \nabla_h^\msfv (\phi^k - \phi^{k-1}  ) , 
  \nabla_h^\msfv ( \phi^{k+1} - \phi^k)  \right)   \nonumber 
\\
 &\ge& - \frac12 \left(  \| \nabla^\msfv_h \phi^{k+1} \|_2^2 
  - \| \nabla^\msfv_h \phi^k \|_2^2   \right)  
  - \frac12 \| \nabla^\msfv_h ( \phi^k - \phi^{k-1} ) \|_2^2 \nonumber 
  \\
 &\ge& - \frac12 \left(  \| \nabla^\msfv_h \phi^{k+1} \|_2^2 
  - \| \nabla^\msfv_h \phi^k \|_2^2   \right)  
  - \frac12 \| \nabla_h ( \phi^k - \phi^{k-1} ) \|_2^2,
    \label{scheme-BDF-stability-5}  
\end{eqnarray}
where the last step applied the Lemma~\ref{lem:weak}.

%

For the surface diffusion term, we have 
\begin{eqnarray} 
  \left(  \Delta_h^2 \phi^{k+1}  ,  \phi^{k+1} - \phi^k \right)  
  =  \left(  \Delta_h \phi^{k+1}  , 
   \Delta_h (\phi^{k+1} - \phi^k)  \right) 
  \ge \frac12 \left(  \| \Delta_h \phi^{k+1} \|_2^2 
  - \| \Delta_h \phi^k \|_2^2   \right) .
    \label{scheme-BDF-stability-3}  
\end{eqnarray} 
Similarly, the following identity is valid for the stabilizing term:  
\begin{eqnarray} 
  s \left(  \Delta_h^2 ( \phi^{k+1} - \phi^k ) , \phi^{k+1} - \phi^k  \right)  
  =  s \| \Delta_h ( \phi^{k+1} - \phi^k ) \|_2^2. 
    \label{scheme-BDF-stability-4}  
\end{eqnarray} 
Meanwhile, an application of Cauchy inequality indicates the following estimate: 
\begin{eqnarray} 
   \frac{1}{\dt} \| \phi^{k+1} - \phi^k \|_2^2
   + A \dt \| \Delta_h ( \phi^{k+1} - \phi^k ) \|_2^2  
   \ge 2 A^{1/2}  \| \nabla_h ( \phi^{k+1} - \phi^k ) \|_2^2 . 
  \label{scheme-BDF-stability-6}    
\end{eqnarray}


Therefore, a combination of (\ref{scheme-BDF-stability-1})-(\ref{scheme-BDF-stability-5}) and (\ref{scheme-BDF-stability-6}) yields 
\begin{eqnarray} 
  &&
  F_h (\phi^{k+1}) - F_h (\phi^k) 
  + \frac{1}{4 \dt}  \left(  
   \| \phi^{k+1} - \phi^k \|_2^2 - \| \phi^k - \phi^{k-1} \|_2^2  \right)  \nonumber 
\\
  &&   
  + \frac12 \left( \| \nabla_h ( \phi^{k+1} - \phi^k ) \|_2^2 
  - \| \nabla_h ( \phi^k - \phi^{k-1} ) \|_2^2  \right) \nonumber \\
  &&\le ( - 2 A^{1/2} + \frac12 ) \| \nabla_h ( \phi^{k+1} - \phi^k ) \|_2^2  \le 0 ,  
  \label{scheme-BDF-stability-7}    
\end{eqnarray} 
provided that $A \ge \frac{1}{16}$. 
Then the proof follows from the definition of the $\tilde{F}_h$ in \eqref{mod-energy-SS-1}.
\end{proof}


 \subsection{$L^\infty_h (0,T; H_h^2)$ Stability of the Numerical Scheme} \label{sec:h3stability}  
    The $L_h^\infty(0,T; H_{h}^2)$ bound of the numerical solution could be derived based on the modified energy stability (\ref{scheme-BDF-stability}). 
\begin{thm}
Let $\phi \in {\mathcal{C}}_\Omega$, then the $L^\infty_h(0,T; H_{h}^2)$ bound of the numerical solution is as follows:
    \begin{equation} 
     \nrm{\phi}_{H_{h}^2}\leq \sqrt{2 \frac{C_0+| \Omega |}{C_1\varepsilon^{2}}}
        := C_2 ,  \label{leading stability} 
    \end{equation} 
where $C_2$ is independent of $s$, $h$ and $T$.     
\end{thm}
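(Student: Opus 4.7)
The plan is to bound $\|\Delta_h \phi^{k+1}\|_2^2$ from above by using the modified energy stability estimate \eqref{scheme-BDF-stability}, and then invoke Proposition~\ref{prop:1} to convert this into an $H_h^2$ bound. Since the proven bound on $\tilde{F}_h$ is uniform in $k$, $h$, and $\tau$, the resulting $H_h^2$ bound is automatically an $L_h^\infty(0,T;H_h^2)$ bound.

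First I would extract a lower bound on $F_h(\phi)$ in terms of $\|\Delta_h\phi\|_2^2$ alone. The only obstacle is the indefinite Ehrlich--Schwoebel contribution $\tfrac14\|\nabla_h^\msfv\phi\|_4^4 - \tfrac12\|\nabla_h^\msfv\phi\|_2^2$. The key observation is the pointwise completing-the-square identity
\begin{equation*}
\tfrac14 |\nabla_h^\msfv \phi_{i+\hf,j+\hf}|^4 - \tfrac12 |\nabla_h^\msfv \phi_{i+\hf,j+\hf}|^2
= \tfrac14\bigl(|\nabla_h^\msfv \phi_{i+\hf,j+\hf}|^2 - 1\bigr)^2 - \tfrac14 \ge -\tfrac14,
\end{equation*}
which, after multiplying by $h^2$ and summing over the vertex lattice, yields
\begin{equation*}
\tfrac14 \|\nabla_h^\msfv \phi\|_4^4 - \tfrac12 \|\nabla_h^\msfv \phi\|_2^2 \ge -\tfrac{|\Omega|}{4}.
\end{equation*}
Hence $F_h(\phi) \ge \tfrac{\varepsilon^2}{2}\|\Delta_h \phi\|_2^2 - \tfrac{|\Omega|}{4}$.

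Next, since the two extra terms $\tfrac{1}{4\tau}\|\phi^{k+1}-\phi^k\|_2^2$ and $\tfrac12\|\nabla_h(\phi^{k+1}-\phi^k)\|_2^2$ in $\tilde{F}_h$ are non-negative, the preceding inequality gives
\begin{equation*}
\tfrac{\varepsilon^2}{2}\|\Delta_h \phi^{k+1}\|_2^2 \le F_h(\phi^{k+1}) + \tfrac{|\Omega|}{4} \le \tilde{F}_h(\phi^{k+1},\phi^k) + \tfrac{|\Omega|}{4} \le C_0 + \tfrac{|\Omega|}{4},
\end{equation*}
where I have used the energy decay $\tilde{F}_h(\phi^{k+1},\phi^k) \le \tilde{F}_h(\phi^1,\phi^0) \le C_0$ established in the previous theorem. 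This gives a uniform-in-$k$ bound $\|\Delta_h \phi^{k+1}\|_2^2 \le 2(C_0 + |\Omega|/4)/\varepsilon^2$, which I absorb into the looser form $2(C_0+|\Omega|)/\varepsilon^2$ to match the statement.

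Finally, I would apply Proposition~\ref{prop:1}, whose hypothesis $\overline{\phi^{k+1}}=0$ is granted by the mass conservation discussion following \eqref{scheme-BDF}, to obtain
\begin{equation*}
\|\phi^{k+1}\|_{H_h^2}^2 \le \tfrac{1}{C_1}\|\Delta_h \phi^{k+1}\|_2^2 \le \tfrac{2(C_0+|\Omega|)}{C_1\varepsilon^2},
\end{equation*}
which, on taking square roots and maximizing over $0 \le k+1 \le M$, gives the stated bound $\|\phi\|_{H_h^2} \le C_2$. The constant $C_2$ depends only on $\varepsilon$, $|\Omega|$, and the initial energy $C_0$, and in particular is independent of $\tau$, $h$, and the final time $T$, as claimed. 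There is no substantive obstacle; the only subtlety is recognizing that one must offset the concave part of $F_h$ by the universal $|\Omega|/4$ bound coming from the double-well structure, rather than trying to absorb it into the diffusion term by inverse inequalities.
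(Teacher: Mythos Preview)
Your proof is correct and follows essentially the same approach as the paper: both bound the indefinite Ehrlich--Schwoebel part of $F_h$ below by a constant multiple of $-|\Omega|$ via a pointwise double-well inequality, then use the modified-energy stability $\tilde F_h\le C_0$ together with the elliptic regularity estimate \eqref{prop 1-0-1} of Proposition~\ref{prop:1}. Your completing-the-square step $\tfrac14 a^4-\tfrac12 a^2\ge -\tfrac14$ is in fact slightly sharper than the paper's two applications of $\tfrac18\psi^4-\tfrac12\psi^2\ge-\tfrac12$, yielding $-|\Omega|/4$ rather than $-|\Omega|$, but the argument is otherwise the same.
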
    
\begin{proof}
Since  
    \begin{equation} 
      \frac18 \psi^4 - \frac12 \psi^2 \ge -\frac12 ,
   \end{equation} 
then we have      
	\begin{equation} 
      \frac18 \|\nabla^\msfv_h\phi \|_4^4 - \frac12 \|\nabla_h^\msfv \phi \|_2^2 \ge - \frac12 | \Omega | ,  
	  \label{H1 bound-1} 
	\end{equation}  
with the discrete $H_h^1$ norm introduced in \eqref{discrete-H1-norm}. 
   Then we arrive at the following bound, for any $\phi \in {\mathcal{C}}_\Omega$:  
   \begin{eqnarray} 
     F_h (\phi) &\ge& \frac18 \|\nabla_h^\msfv \phi \|_4^4 + \frac{\varepsilon^2}{2} \| \Delta_h \phi \|_2^2 - \frac12 | \Omega | \nonumber\\
     &\ge& \frac12 \| \nabla_h^\msfv\phi \|_2^2 + \frac{\varepsilon^2}{2} \| \Delta_h \phi \|_2^2 - | \Omega | \nonumber \\
  & \ge&  \frac{\varepsilon^2}{2} \| \Delta_h \phi \|_2^2 - | \Omega | \nonumber\\
     &\ge& \frac12 C_1\varepsilon^2 \| \phi \|_{H_{h}^2}^2 - | \Omega | ,
 \end{eqnarray}  
in which $C_1$ is a constant associated with the discrete elliptic regularity: $\| \Delta_h \phi \|_2^2 \ge C_1 \| \phi \|_{H_{h}^2}^2$, as stated in \eqref{prop 1-0-1} of Proposition~\ref{prop:1}.  Consequently, its combination with \eqref{mod-energy-SS-1} finishes the proof.
\end{proof}

	\begin{rem}
Note that the constant $C_2$ is independent of $s$, $h$ and $T$, but does depends on $\varepsilon$. In particular, $C_2=O (\varepsilon^{-1})$.
	\end{rem}
    
\section{Convergence Analysis and Error Estimate} \label{sec:convergence}

\subsection{Error equations and consistency analysis} \label{subsec:error-equations}
A detailed Taylor expansion implies the following truncation error: 
\begin{eqnarray} 
\frac{3\Phi^{k+1} -4 \Phi^k+\Phi^{k-1}}{2\dt} &=&  \nabla_h^\msfv \cdot ( | \nabla_h^\msfv \Phi^{k+1} |^2 \nabla_h^\msfv \Phi^{k+1} ) -  \Delta_h^\msfv ( 2 \Phi^k - \Phi^{k-1} )  \nonumber 
\\
 &&- A \dt \Delta_h^2 (\Phi^{k+1} - \Phi^k)  - \varepsilon^2 \Delta_h^2 \Phi^{k+1}
 + \tau^k , \label{consistency-SS} 
\end{eqnarray}
with $\nrm{ \tau^k }_2 \le C (h^2+\dt^2)$ . Consequently, with an introduction of the error function
\begin{eqnarray} 
  e^k = \Phi^k - \phi^k ,   \quad \forall \, k \ge 0 , 
  \label{error-unction-1} 
\end{eqnarray}
we get the following evolutionary equation, by subtracting \eqref{scheme-BDF} from \eqref{consistency-SS}:
\begin{eqnarray} 
\frac{3e^{k+1} -4 e^k+e^{k-1}}{2\dt} &=&  \nabla_h^\msfv \cdot ( | \nabla_h^\msfv \Phi^{k+1} |^2 \nabla_h^\msfv \Phi^{k+1}  -  | \nabla_h^\msfv \phi^{k+1} |^2 \nabla_h^\msfv \phi^{k+1} ) \nonumber 
\\
 &&- \Delta_h^\msfv ( 2 e^k - e^{k-1} )  - A \dt \Delta_h^2 (e^{k+1} - e^k)  - \varepsilon^2 \Delta_h^2 e^{k+1}
 + \tau^k , \label{consistency-SS-2} 
\end{eqnarray}
In addition, from the PDE analysis for the SS equation in  \cite{li03,li04} and the global in time $H_{h}^2$ stability \eqref{leading stability} for the numerical solution,  we also get the $L_h^\infty$, $W^{1,6}$ and $H_{h}^2$ bounds for both the exact solution and numerical solution, uniform in time: 
	\begin{equation} 
\| \Phi^k \|_{\infty } ,  \  \| \Phi^k \|_{W^{1,6} }  , \  \| \Phi^k \|_{H_{h}^2 }  \le C_3 ,  \quad  \| \phi^k \|_{\infty } ,  \  \| \phi^k \|_{W^{1,6} }  , \  \| \phi^k \|_{H_{h}^2 }  \le C_3 ,  \quad  \forall \, k \ge 0 ,
	\label{consistency-bound-1}
	\end{equation}
where the 3-D embeddings of $H_{h}^2$ into $L_h^\infty$ and into $W^{1, 6}$ have been applied, as well as the discrete Sobolev embedding inequalities \eqref{prop 1-0-2},  \eqref{prop 1-0-3} in Proposition~\ref{prop:1}.
 
\subsubsection{Stability and convergence analysis}
 
The convergence result is stated in the following theorem. 

\begin{thm}\label{thm:convergence-2nd}
Let $\Phi \in \mathcal{R}$  be the projection of the exact periodic solution of the SS equation \eqref{equation-SS} with the initial data $\phi^{0}:=\Phi^0 \in H^2_{\rm per}(\Omega)$, $\phi^1 := \Phi^1\in H^2_{\rm per}(\Omega)$, and the regularity class
\begin{eqnarray} 
  \mathcal{R} = H^3 (0,T; C^0(\Omega)) \cap H^2 (0,T; C^2(\Omega)) \cap H^1 (0,T; C^4(\Omega)) 
  \cap L^\infty (0,T; C^6(\Omega))  . 
  \label{regularity assumption} 
\end{eqnarray}    
Suppose $\phi$ is the fully-discrete solution of \eqref{scheme-BDF}. Then the following convergence result holds as $s$, $h$ goes to zero: 
\begin{eqnarray} 
  \| e^k \|_2  
  + \left( \frac{3}{16}\varepsilon^2s \sum_{\ell=0}^{k} \| \Delta_h  e^\ell  \|^2  \right)^{1/2} 
  \le C  ( s^2 + h^2 )  ,  
\label{thm:convergence-FD-2nd}
\end{eqnarray}     
where the constant $C>0$ is independent of $s$ and $h$. 
\end{thm}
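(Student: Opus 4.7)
The plan is to test the error equation \eqref{consistency-SS-2} with $e^{k+1}$ in the discrete $L^2$ inner product and collect each contribution separately. The temporal-difference term yields the standard BDF2 identity, which produces the telescoping quantity $\frac{1}{4\dt}(\|e^{k+1}\|_2^2 + \|2e^{k+1}-e^k\|_2^2)$ together with a non-negative remainder $\frac{1}{4\dt}\|e^{k+1}-2e^k+e^{k-1}\|_2^2$ that may simply be dropped. After a discrete summation by parts the nonlinear 4-Laplacian contribution becomes
\[
-\bigl(|\nabla_h^\msfv\Phi^{k+1}|^2\nabla_h^\msfv\Phi^{k+1} - |\nabla_h^\msfv\phi^{k+1}|^2\nabla_h^\msfv\phi^{k+1},\ \nabla_h^\msfv e^{k+1}\bigr),
\]
and the pointwise vector monotonicity $(|X|^2 X - |Y|^2 Y)\cdot(X-Y)\ge 0$ shows that this sits on the favorable side of the equation and may be discarded. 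The biharmonic surface-diffusion term supplies $\varepsilon^2\|\Delta_h e^{k+1}\|_2^2$, while the Douglas--Dupont stabilizer yields the telescope $\frac{A\dt}{2}(\|\Delta_h e^{k+1}\|_2^2 - \|\Delta_h e^k\|_2^2)$ plus a non-negative residual; both stay on the left-hand side.

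The one delicate contribution is the second-order extrapolation of the concave Laplacian, which becomes $(\nabla_h^\msfv(2e^k - e^{k-1}),\nabla_h^\msfv e^{k+1})$ after summation by parts. Here I would first invoke Lemma~\ref{lem:weak} to replace $\nabla_h^\msfv$ by the standard gradient $\nabla_h$, and then use the identity $\|\nabla_h \nu\|_2^2 = -(\Delta_h \nu,\nu)_2 \le \|\nu\|_2 \|\Delta_h \nu\|_2$ together with Young's inequality at a scale tuned to $\varepsilon^2$. Distributing the resulting cross terms across the indices $k-1,k,k+1$ consumes a fraction strictly less than $\varepsilon^2$ of the surface-diffusion coefficient, leaving a net of at least $\tfrac{3\varepsilon^2}{16}\|\Delta_h e^{k+1}\|_2^2$ on the left; everything else collapses into a constant multiple of $\|e^{k-1}\|_2^2 + \|e^k\|_2^2 + \|e^{k+1}\|_2^2$. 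The consistency bound $\|\tau^k\|_2 \le C(\dt^2 + h^2)$ then handles the truncation source via a routine Cauchy--Schwarz and Young estimate.

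Summing the resulting recursive inequality from $\ell = 1$ to $\ell = k$, and using the exact initialization $e^0 = e^1 = 0$, all telescoping quantities on the left collapse cleanly to the discrete energy at level $k+1$, and the right-hand side takes the form $C(\dt^2 + h^2)^2 T + C\dt\sum_{\ell \le k+1}\|e^\ell\|_2^2$. A standard discrete Gronwall inequality then yields \eqref{thm:convergence-FD-2nd}. The step I expect to require the most care is the absorption accounting: one must control three separate occurrences of $\|\nabla_h^\msfv e^\ell\|_2^2$ against the single surface-diffusion reservoir $\varepsilon^2 \|\Delta_h e^{k+1}\|_2^2$, which explains the seemingly odd prefactor $\tfrac{3}{16}\varepsilon^2$ in \eqref{thm:convergence-FD-2nd} rather than $\varepsilon^2$ itself. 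Once those constants are balanced, the remainder of the argument is routine.
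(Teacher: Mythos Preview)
Your proposal is correct, and the overall architecture (test with $e^{k+1}$, BDF2 telescope, absorb the extrapolated concave term into the biharmonic reservoir via $\|\nabla_h\nu\|_2^2\le\|\nu\|_2\|\Delta_h\nu\|_2$, then Gronwall) matches the paper. The one place you diverge is the nonlinear 4-Laplacian term, and your treatment there is in fact cleaner than the paper's.

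The paper does \emph{not} invoke the vector monotonicity $(|X|^2X-|Y|^2Y)\cdot(X-Y)\ge 0$. Instead it writes
\[
|\nabla_h^\msfv\Phi|^2\nabla_h^\msfv\Phi-|\nabla_h^\msfv\phi|^2\nabla_h^\msfv\phi
= \bigl(\nabla_h^\msfv(\Phi+\phi)\cdot\nabla_h^\msfv e\bigr)\nabla_h^\msfv\Phi
+ |\nabla_h^\msfv\phi|^2\nabla_h^\msfv e,
\]
keeps the second piece as a sign, and estimates the first via the discrete H\"older inequality with exponents $6,6,6,2$. This forces the paper to spend the a~priori $W^{1,6}_h$ bound on the \emph{numerical} solution coming from the $L^\infty_h(0,T;H^2_h)$ energy stability (Theorem~2.5) together with the discrete Sobolev embedding of Proposition~\ref{prop:1}. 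That chain of estimates then eats a full $\tfrac{3}{4}\varepsilon^2\|\Delta_h e^{k+1}\|_2^2$ from the surface-diffusion reservoir; combined with the $\tfrac{1}{16}\varepsilon^2$ spent on the extrapolation term, this is where the $\tfrac{3}{16}\varepsilon^2$ in \eqref{thm:convergence-FD-2nd} actually originates. Your explanation that the constant comes solely from the three gradient indices in the extrapolation bookkeeping is therefore not quite how the paper arrives at it, but under your monotonicity shortcut you would recover the stated bound with room to spare.

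What your route buys: it bypasses the need for any a~priori bound on $\phi^{k+1}$ beyond existence, so the convergence proof decouples from the energy-stability theorem and from the discrete embedding machinery in the appendix. What the paper's route buys: the decomposition-and-H\"older argument is reusable for non-monotone nonlinear terms, which is why that template appears in related works on Cahn--Hilliard-type models; here, though, it is strictly more work than necessary.
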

\begin{proof}
Taking an  inner product with the numerical error equation \eqref{consistency-SS-2} by $e^{k+1}$ gives
 \begin{eqnarray} 
0&=&\left(\frac{3e^{k+1} -4 e^k+e^{k-1}}{2\dt}, e^{k+1}\right)+\left( | \nabla_h^\msfv \Phi^{k+1} |^2 \nabla_h^\msfv \Phi^{k+1}-| \nabla_h^\msfv \phi^{k+1} |^2 \nabla_h^\msfv \phi^{k+1}, \nabla_h^\msfv e^{k+1}\right)\nonumber \\
&& -\left(\nabla_h^\msfv ( 2 e^k - e^{k-1} ), \nabla_h^\msfv e^{k+1}\right)  
+A \dt\left(\Delta_h (e^{k+1} - e^k),\Delta_h e^{k+1}\right)\nonumber \\
&& +\varepsilon^2 \left(\Delta_h e^{k+1}, \Delta_h e^{k+1} \right)
 -\left( \tau^k,  e^{k+1}\right)\nonumber \\
&=:& J_1+ J_2+J_3+J_4+J_5+J_6. \label{consistency-SS-3} 
 \end{eqnarray}
For the time difference  error term $J_1$,
  \begin{eqnarray} 
\left(\frac{3e^{k+1} -4 e^k+e^{k-1}}{2\dt}, e^{k+1}\right)&=&\frac{3}{4\dt} \| e^{k+1} \|_2^2-\frac{1}{\dt} \| e^{k} \|_2^2 + \frac{1}{4\dt} \| e^{k-1} \|_2^2 \nonumber\\
&&+\frac{1}{\dt} \| e^{k+1}-e^k \|_2^2-\frac{1}{4\dt} \| e^{k+1}-e^{k-1} \|_2^2.
\label{error-J1}
  \end{eqnarray}
For the backwards diffusive error term $J_3$, we have
\begin{eqnarray}
 -\left(\nabla_h^\msfv  ( 2 e^k - e^{k-1} ), \nabla_h ^\msfv e^{k+1}\right)&=& -\frac12\| \nabla_h^\msfv  e^{k+1} \|_2^2 -  \|\nabla_h^\msfv  e^{k} \|_2^2 + \frac12\| \nabla_h^\msfv  e^{k-1} \|_2^2\nonumber\\
 && +\| \nabla_h^\msfv  (e^{k+1}-e^k) \|_2^2 -\frac12 \| \nabla_h^\msfv  (e^{k+1}-e^{k-1}) \|_2^2.
\label{error-J3}  
\end{eqnarray} 
And for the stabilizing term $J_4$, 
\begin{eqnarray} 
A \dt\left(\Delta_h (e^{k+1} - e^k),\Delta_h e^{k+1}\right)&=& \frac{A\dt}{2}\left( \| \Delta_h e^{k+1} \|_2^2 - \| \Delta_h e^{k} \|_2^2 + \| \Delta_h (e^{k+1}-e^{k}) \|_2^2 \right). 
 \label{error-J4}  
\end{eqnarray}  
For the surface diffusion error term $J_5$ and the local truncation error term $J_6$, we have 
\begin{eqnarray} 
\varepsilon^2 \left(\Delta_h e^{k+1}, \Delta_h e^{k+1} \right)=\varepsilon^2 \| \Delta_h e^{k+1} \|_2^2,
\label{error-J5}  
\end{eqnarray}
and
\begin{eqnarray} 
 -\left( \tau^k,  e^{k+1}\right) \leq \| \tau^k \|_2 \cdot \| e^{k+1} \|_2 \leq  \frac12 \| \tau^k \|_2^2+\frac12 \| e^{k+1} \|_2^2.
\label{error-J6}  
\end{eqnarray}
For the nonlinear error term $J_2$, we adopt the same trick in \cite{feng2016fch}, and get   
\begin{eqnarray} 
J_2&=&\left( | \nabla_h^\msfv  \Phi^{k+1} |^2 \nabla_h^\msfv  \Phi^{k+1}-| \nabla_h^\msfv  \phi^{k+1} |^2 \nabla_h^\msfv  \phi^{k+1}, \nabla_h^\msfv  e^{k+1}\right)\nonumber\\
&=&\left( \nabla_h^\msfv  (\Phi^{k+1}+\phi^{k+1}) \cdot\nabla_h^\msfv  e^{k+1}\nabla_h^\msfv  \Phi^{k+1}, \nabla_h^\msfv  e^{k+1}\right)+\left( | \nabla_h^\msfv  \phi^{k+1} |^2 \nabla_h^\msfv  e^{k+1}, \nabla_h^\msfv  e^{k+1}\right)\nonumber\\
&=:& J_{2,1}+J_{2,2}.
\label{error-J2}  
\end{eqnarray}
For the first part $J_{2,1}$ of \eqref{error-J2}, we have 
\begin{eqnarray} 
 - J_{2,1}&\leq&C_4 \left( \| \nabla_h^\msfv  \Phi^{k+1} \|_6 + \| \nabla_h^\msfv  \phi^{k+1} \|_6 \right)\cdot \| \nabla_h^\msfv  \Phi^{k+1} \|_6 \cdot \| \nabla_h^\msfv  e^{k+1} \|_6 \cdot \| \nabla_h^\msfv  e^{k+1} \|_2\nonumber\\
&\leq& C_5C_3^2 \| \nabla_h^\msfv  e^{k+1} \|_6 \cdot \| \nabla_h^\msfv  e^{k+1} \|_2\nonumber\\
&\leq& C_5C_3^2 \| \nabla_h  e^{k+1} \|_6 \cdot \| \nabla_h  e^{k+1} \|_2\nonumber\\
&\leq&C_6 \| \Delta_h e^{k+1} \|_2 \cdot \| e^{k+1} \|_2^{\frac{1}{2}} \| \Delta_h  e^{k+1} \|_2^{\frac{1}{2}}\nonumber\\
&\leq&C_7 \| e^{k+1} \|_2^{\frac{1}{2}} \cdot \| \Delta_h  e^{k+1} \|_2^{\frac{3}{2}} \nonumber\\
&\leq&C_8 \| e^{k+1} \|_2^{2} +\frac{3}{4}\varepsilon^2 \| \Delta_h e^{k+1} \|_2^{2} , 
\label{error-J21}  
\end{eqnarray}
in which the $W^{1,6}$ bound \eqref{consistency-bound-1} for the exact and numerical solutions was recalled in the second step, the Sobolev embedding from $H_{h}^2$ into $W^{1,6}$ and the estimate \eqref{consistency-bound-1} were used in the last step.  The estimate for the second part $J_{2,2}$ of \eqref{error-J2} is trivial:  
\begin{eqnarray} 
 J_{2,2} \ge 0 . 
\label{error-J22}  
\end{eqnarray}
Then we arrive at
\begin{eqnarray} 
- J_2&\leq&C_{9} \| e^{k+1} \|_2^{2}+\frac{3}{4}\varepsilon^2 \| \Delta_h e^{k+1} \|_2^{2}.
\label{error-J2-1}  
\end{eqnarray}
Finally, a combination of  \eqref{error-J1}, \eqref{error-J3}, \eqref{error-J4}, \eqref{error-J5}, \eqref{error-J6} and \eqref{error-J2-1}  yields that
  \begin{eqnarray} 
&&\frac{3}{4\dt}\left( \| e^{k+1} \|_2^2 - \| e^{k} \|_2^2 \right)-\frac{1}{4\dt}\left( \| e^{k} \|_2^2 - \| e^{k-1} \|_2^2\right)+\frac{1}{2\dt} \| e^{k+1}-e^{k} \|_2^2\nonumber\\
&&-\frac{1}{2\dt} \| e^{k}-e^{k-1} \|_2^2 + \frac{As}{2}\left( \| \Delta_h e^{k+1} \|_2^2 - \| \Delta_h e^{k} \|_2^2\right) + \varepsilon^2 \| \Delta_h e^{k+1} \|_2^2\nonumber\\
&\leq& \frac12 \| \tau^k \|_2^2 + \frac12 \| e^{k+1} \|_2^2 +C_{9}  \| e^{k+1} \|_2^{2} + \frac{3}{4}\varepsilon^2 \| \Delta_h e^{k+1} \|_2^{2}\\
&&-\| \nabla_h^\msfv e^{k+1} \|_2^2 - 2 \| \nabla_h^\msfv e^{k} \|_2^2 - \| \nabla_h^\msfv (e^{k+1}-e^{k-1}) \|_2^2 \nonumber \\
&&+ 4 \varepsilon^{-2} \| e^{k+1} \|_2^2 + 288\varepsilon^{-2} \| e^{k} \|_2^2 + 72 \varepsilon^{-2} \| e^{k-1} \|_2^2   \nonumber\\
&&+ \frac{1}{16}\varepsilon^2 \left( \| \Delta_h e^{k+1} \|_2^2 + \| \Delta_h e^{k} \|_2^2 + \|\Delta_h e^{k-1} \|_2^2 \right)\nonumber.
\label{error-combination}
  \end{eqnarray}
A summation in time implies that
  \begin{eqnarray} 
&&\frac{3}{4\dt}\left( \| e^{k+1} \|_2^2 - \| e^{1} \|_2^2\right)-\frac{1}{4\dt}\left( \| e^{k} \|_2^2 - \| e^{0} \|_2^2 \right)+\frac{1}{2\dt} \| e^{k+1}-e^{k} \|_2^2\nonumber\\
&&-\frac{1}{2\dt} \| e^{1}-e^{0} \|_2^2 + \frac{As}{2}\left( \| \Delta_h e^{k+1} \|_2^2 - \| \Delta_h e^{0} \|_2^2 \right) + \frac{3}{16} \varepsilon^2 \sum _{\ell=1}^{k} \| \Delta_h e^{\ell+1} \|_2^2 \nonumber\\
&\leq& \frac12\sum _{\ell=1}^{n} \| \tau^\ell \|_2^2 + \sum _{\ell=1}^{k} \left(\frac12 +C_{9} + 4 \varepsilon^{-2} \right) \| e^{\ell+1} \|_2^{2} \\
&&+72\varepsilon^{-2}\sum _{\ell=1}^{k}\left(4 \| e^{\ell} \|_2^2 + \| e^{\ell-1} \|_2^2\right)+ \frac{1}{16}\varepsilon^2 \sum_{\ell=1}^{k} \left( \| \Delta_h e^{\ell} \|_2^2 + \| \Delta_h e^{\ell-1} \|_2^2 \right) . \nonumber
\label{sum}
  \end{eqnarray}
  In turn, an application of discrete Gronwall inequality yields the desired convergence result \eqref{thm:convergence-2nd}. This completes the proof of Theorem~\ref{thm:convergence-FD-2nd}. 
\end{proof}
\section{Precondition Steepest Descent Solver}\label{sec:psd}
In this section we describe a preconditioned steepest descent (PSD) algorithm following the practical and  theoretical framework in~\cite{feng2016preconditioned}. The fully discrete scheme \eqref{scheme-BDF} can be recast as a minimization problem: For any $\phi \in \mathcal{C}_{\rm per}$, the following energy functional is introduced: 
	\begin{eqnarray}
E_h[\phi]&=& \frac{3}{\dt}\nrm{\phi}_2^2+\frac{1}{4} \nrm{\nabla_h^\msfv \phi}_4^4 +\frac12 (A \dt+\varepsilon^2 )\nrm{\Delta_h\phi }_2^2 . \label{equation-energy}
	\end{eqnarray}
 One observes that the fully discrete scheme \eqref{scheme-BDF} is the discrete variation of the strictly convex energy \eqref{equation-energy} set equal to zero. The nonlinear scheme at a fixed time level may be expressed as
	\begin{equation}
\mathcal{N}_h[\phi] = f,
	\end{equation}
with 
	\begin{eqnarray} 
\mathcal{N}_h[\phi] =    \frac{3}{2 }\phi^{k+1} -\dt\nabla_h^\msfv  \cdot ( | \nabla_h^\msfv  \phi^{k+1} |^2 \nabla_h^\msfv  \phi^{k+1} ) +(A \dt^2+\dt\varepsilon^2 ) \Delta_h^2\phi^{k+1} , \label{non-operator}
	\end{eqnarray} 
and
	\begin{eqnarray} 
f =  \frac{1}{2}(4 \phi^k - \phi^{k-1})
   - \dt \Delta_h^\msfv ( 2 \phi^k - \phi^{k-1} )  + A\dt^2 \Delta_h^2 \phi^k  .
	\end{eqnarray} 	
The main idea of the PSD solver is to use a linearized version of the nonlinear operator as a pre-conditioner, or in other words, as a metric for choosing the search direction.  A linearized version of the nonlinear operator $\mathcal{N}$, denoted as $\mathcal{L}_h: \mathring{\mathcal C}_{\rm per} \to \mathring{\mathcal C}_{\rm per}$,  is defined as follows: 	\begin{equation*}
{\mathcal L}_h[\psi] :=  \frac{3}{2 }\psi - \dt\Delta_h \psi  +(A \dt^2+\dt\varepsilon^2 ) \Delta_h^2 \psi.
	\end{equation*}
Clearly, this is a positive, symmetric operator, and we use this as a pre-conditioner for the method. Specifically, this ``metric" is used to find an appropriate search direction for the steepest descent solver~\cite{feng2016preconditioned}. Given the current iterate $\phi^{n}\in {\mathcal C}_{\rm per}$, we define the following \emph{search direction} problem: find $d^n \in \mathring{\mathcal C}_{\rm per}$ such that
\[
{\mathcal L}_h[d^n]= f-\mathcal{N}_h[\phi^n]:=r^n,
\]
where $r^n$ is the nonlinear residual of the $n^{\rm th}$ iterate $\phi^n$. This last equation  can be solved efficiently using the Fast Fourier Transform (FFT).

We then obtain the next iterate as
	\begin{equation}
\phi^{n+1} = \phi^{n} + \overline{\alpha} d^n,
	\end{equation}
where $\overline{\alpha}\in\mathbb{R}$ is the unique solution to the steepest descent line minimization problem 
	\begin{equation}
\overline{\alpha} := \operatorname*{argmax}_{\alpha\in\mathbb{R}} E_h[\phi^{n} + \alpha d^n]= \operatorname*{argzero}_{\alpha\in\mathbb{R}}\delta E_h[\phi^{n} + \alpha d^n](d^n) .
	\label{eqn-search}
	\end{equation}
The theoretical analysis in~\cite{feng2016preconditioned} suggests that the iteration sequence $\phi^n$ converges geometrically to $\phi^{k+1}$, with $\phi^{k+1}$ the exact numerical solution of scheme \eqref{scheme-BDF} at time level $k+1$, \emph{i.e.},  $\mathcal{N}_h[\phi^{k+1}] = f$. And also, this analysis implies a convergence rate  independent of $h$. 

	\begin{rem}
	\label{rem: CN-1}
The Crank-Nicolson version of the second order energy stable scheme for the SS equation \eqref{equation-SS}, proposed and analyzed in \cite{shen2012second}, takes the following (spatially-continuous) form:
	\begin{equation}
	\label{scheme-CN-1}
	\begin{aligned}
  \frac{\phi^{k+1}-\phi^k}{s} = &   \chi ( \nabla \phi^{k+1}, \nabla \phi^k) - \Delta \left( \frac32 \phi^k - \frac12 \phi^{k-1} \right) - \frac{\varepsilon^2}{2} \Delta^2 \left( \phi^{k+1}  + \phi^{k} \right) ,
	\\
  \chi ( \nabla \phi^{k+1}, \nabla \phi^k)  := & \frac{1}{4} \nabla \cdot \left( ( | \nabla \phi^{k+1} |^2 + | \nabla \phi^k |^2 )  \nabla ( \phi^{k+1} + \phi^k ) \right).
	\end{aligned}
	\end{equation}
In this numerical approach, every terms in the chemical potential are evaluated at time instant $t^{k+1/2}$. 

Both the CN version \eqref{scheme-CN-1} and the BDF one \eqref{scheme-BDF} require a nonlinear solver, while the nonlinear term in \eqref{scheme-CN-1} takes a more complicated form than \eqref{scheme-BDF}, which comes from different time instant approximations. As a result, a stronger convexity of the nonlinear term in the BDF one \eqref{scheme-BDF} is expected to greatly improve the numerical efficiency in the nonlinear iteration. 

Such a numerical comparison has been undertaken for the Cahn-Hilliard (CH) model in recent works: the CN and BDF versions of second order accurate, energy stable numerical schemes for the CH equation, proposed in~\cite{guo16}, \cite{yan16a}, respectively, were tested using the same numerical set-up. The numerical experiments have indicated that, since the nonlinear term in the BDF approach has a stronger convexity than the one in the CN one, a 20 to 25 percent improvement of the computational efficiency is generally available for the CH model. 

For the numerical comparison between the BDF and CN approaches for the SS equation \eqref{equation-SS}, namely \eqref{scheme-BDF}, \eqref{scheme-CN-1}, respectively. Such an efficiency improvement is expected to be much greater. This expectation comes from a subtle fact that, the modified CN approximation to the 4-Laplacian term, $\chi ( \nabla \phi^{k+1}, \nabla \phi^k)$, does not correspond to a convex energy functional, because of the vector gradient form (other than a scalar form) in the 4-Laplacian expansion. As a consequence, the PSD algorithm proposed in this section could hardly be efficiently applied to solve for \eqref{scheme-CN-1}, while the PSD application to the BDF approach \eqref{scheme-BDF} has led to a great success. In fact, an application of the Polak-Ribi\'ere variant of NCG method \cite{polak69} to solve for \eqref{scheme-CN-1}, as reported in \cite{shen2012second}, has shown a fairly poor numerical performance.  
	\end{rem}

\section{Preconditioned Nonlinear Conjugate Gradient Solvers}\label{sec:pncg}
Based on the PSD algorithm, we define $\overline{g}_k = \mathcal{L}_h^{-1}(r^k)$. Then our PNCG algorithms are given by the following equations:
\begin{eqnarray}
\phi^{k+1} &=& \phi^k + \overline{\alpha}_k d^k\\
d^{k+1} &=& -\overline{g}_{k+1} + \overline{\beta}_{k+1}d^{k}, d^0 = -\overline{g}_{0}. 
\end{eqnarray}

And more details can be found in Algorithm~\ref{algorithm:pncg}. 

 \begin{algorithm}
  \caption{Linearly Preconditioned Nonlinear Conjugate Gradient (PNCG) Method}
  \label{algorithm:pncg}
  \begin{algorithmic}[1]
  \State Compute residual: $r^0:=f-\mathcal{N}_h(\phi^0)$
  \State Set $\overline{g}_{0} = \mathcal{L}_h^{-1}(r^0)$
  \State Set $d^0 \leftarrow -\overline{g}_{0}, k \leftarrow 0$
  \While{$\overline{g}_{k} \neq 0$}
  	\State Compute $\overline{\alpha}_k$ 
 \Comment{secant search}
  \State  $\phi^{k+1} \leftarrow \phi^{k}+\overline{\alpha}_k d^k$ \Comment{steepest descent algorithm}
  	\State $\overline{g}_{k+1} \leftarrow \mathcal{L}_h^{-1}(r^{k+1})
  	       = \mathcal{L}_h^{-1}(f-\mathcal{N}_h(\phi^{k+1})) $
    \State Compute $\overline{\beta}_{k+1}$    
   \State  $d^{k+1} \leftarrow -\overline{g}_{k+1}+\overline{\beta}_{k+1} d^k$   
   \State $k\leftarrow k+1$  
  \EndWhile
  \end{algorithmic}
  \end{algorithm}	

However, there several different ways to choose the scaling parameter $\overline{\beta}_{k+1}$. And two of the best known formulas for $\overline{\beta}_{k+1}$ are named after their deveiops:
\begin{description}[align=left,labelwidth=1cm]
\item \textbf{Fletcher-Reeves \cite{fletcher1964function}:}
\begin{eqnarray}
\overline{\beta}_{k+1}^{FR} =\frac{\overline{g}_{k+1}^T\overline{g}_{k+1}}{\overline{g}_{k}^T\overline{g}_{k}}
\label{eqn:betaFR}
\end{eqnarray}
\item \textbf{Polak-Ribi\`ere \cite{ere1969note}:}
\begin{eqnarray}
\overline{\beta}_{k+1}^{PR} =\frac{\overline{g}_{k+1}^T(\overline{g}_{k+1}-\overline{g}_{k})}{\overline{g}_{k}^T\overline{g}_{k}}
\label{eqn:betaPR}
\end{eqnarray}
\end{description}

Based on those two best known formulas, we proposed the following two PNCG solvers:

\begin{description}[align=left,labelwidth=1cm]
\item \textbf{PNCG1:}
\begin{eqnarray}
\overline{\beta}_{k+1} =\max{\{0,\overline{\beta}_{k+1}^{PR}\}}
\end{eqnarray}
\item \textbf{PNCG2 :}
\begin{eqnarray}
\overline{\beta}_{k+1} =\max{\{0,\min\{\overline{\beta}_{k+1}^{FR},\overline{\beta}_{k+1}^{PR}\}\}}
\end{eqnarray}
\end{description}

\begin{rmk}
The PNCG2 is also called hybrid conjugate gradient algorithm in \cite{zhou2005preconditioned}.
\end{rmk}

\section{Numerical Experiments}\label{sec:num}
	\subsection{Convergence test and the complexity of the Preconditioned solvers}
	\label{subsec-convergence}
In this subsection we demonstrate the accuracy and complexity of the preconditioned solvers. We present the results of the convergence test and perform some sample computations to investigate the effect of the time step $\dt$ and stabilized parameter $A$  for the energy $F_h(\phi)$.  
	
To simultaneously demonstrate the spatial accuracy and the efficiency of the solver, we perform a typical time-space convergence test for the fully discrete scheme \eqref{scheme-BDF} for the slope selection model.
As in \cite{chen16b, shen2012second, wang10a},  we perform the Cauchy-type convergence test using the following periodic initial data~\cite{shen2012second}:
	\begin{eqnarray}
	\label{eqn:init2nd}
 u (x,y,0)&=&0.1\sin^2\left(\frac{2\pi x}{L}\right)\cdot \sin\left(\frac{4\pi (y-1.4)}{L}\right)
 	\nonumber
	\\
&& - 0.1\cos\left(\frac{2\pi (x-2.0)}{L}\right)\cdot\sin\left(\frac{2\pi y}{L}\right),
	\end{eqnarray}
with $\Omega=[0, 3.2]^2$, $\varepsilon = 0.1$, $\dt=0.01h$, $A=\nicefrac{1}{16}$ and $T=0.32$. We use a linear refinement path, \emph{i.e.}, $s=Ch$. At the final time $T=0.32$, we expect the global error to be $\mathcal{O}(s^2)+\mathcal{O}(h^2)=\mathcal{O}(h^2)$,  in either the  $L_h^2$ or $L_h^\infty$ norm, as $h, s\to 0$. The Cauchy difference is defined as $\delta_\phi: =\phi_{h_f}-\mathcal{I}_c^f(\phi_{h_c})$, where $\mathcal{I}_c^f$ is a bilinear interpolation operator (with the Nearest Neighbor Interpolation applied in Matlab, which is similar to the 2D case in \cite{feng2016fch,feng2016preconditioned} and the 3D case in \cite{dong2016convergence}). This requires a relatively coarse solution, parametrized by $h_c$, and a relatively fine solution, parametrized by $h_f$, in particular $h_c = 2 h_f$, at the same final time. The $L_h^2$ norms of Cauchy difference and the convergence rates can be found in Table~\ref{tab:cov-bdf2etf}. The  results confirm our expectation for the second-order convergence in both space and time.

\begin{table}[!htb]
	\begin{center}
		\caption{Errors, convergence rates, average iteration numbers and average CPU time (in seconds) for each time step. Parameters are given in the
		text, and the initial data is defined in \eqref{eqn:init2nd}. The refinement path is $s=0.01h$. } \label{tab:cov-bdf2etf}
		\begin{tabular}{ccccccccccccc}
		\hline &&&&\multicolumn{2}{c}{PSD}&\multicolumn{2}{c}{PNCG1}&\multicolumn{2}{c}{PNCG2}\\
			\hline $h_c$&$h_{f}$&$\nrm{\delta_\phi}_{2}$ & Rate&$\#_{iter}$ &$T_{cpu}(h_f)$&$\#_{iter}$ &$T_{cpu}(h_f)$&$\#_{iter}$ &$T_{cpu}(h_f)$\\
			\hline $\frac{3.2}{16}$&$\frac{3.2}{32}$& $1.3938\times 10^{-2}$&- & 11  & 0.0019
			& 9 & 0.0016 & 9 & 0.0015
			\\$\frac{3.2}{32}$&$\frac{3.2}{64}$& $1.7192\times 10^{-3}$ & 3.02& 10 & 0.0103
			& 9 & 0.0093 & 8 & 0.0085
			\\ $\frac{3.2}{64}$ &$\frac{3.2}{128}$& $3.8734\times 10^{-4}$& 2.15 & 08 & 0.0529
			& 8 & 0.0486 & 7 & 0.0454
			\\ $\frac{3.2}{128}$ & $\frac{3.2}{256}$ &$9.4766\times 10^{-5}$& 2.03 & 07 & 0.2512
			& 7 & 0.2038 & 6 & 0.2046
			\\ $\frac{3.2}{256}$ & $\frac{3.2}{512}$&$2.3564\times 10^{-5}$& 2.01 & 07 & 1.6650
			& 7 & 1.6268 & 6 & 1.5207
			\\
			\hline
		\end{tabular}
	\end{center}
\end{table}

In the second part of this test, we demonstrate the complexity of the preconditioned solvers  with initial data \eqref{eqn:init2nd}. In Figure~\ref{fig:complexitybdf2etf}, we plot the semi-log scale of the relative residuals versus preconditioned solvers' iteration numbers for various values of $h$ and $\varepsilon$ at $T= 0.02$, with time step $\dt= 10^{-3}$. The other common parameters are set as $A=\nicefrac{1}{16}$, $\Omega=[0, 3.2]^2$. The figures in the top row of Figure~\ref{fig:complexitybdf2etf} indicate that the convergence rate (as gleaned from the error reduction) is nearly uniform and nearly independent of $h$ for a fixed $\varepsilon$.  And the plots in the bottom row of Figure~~\ref{fig:complexitybdf2etf} show that the number of preconditioned solvers' iterations increases with a decreasing value of $\varepsilon$, which confirms the theoretical results that the PSD solver is dependent on parameter $\varepsilon$ in~\cite{feng2016preconditioned}. Figure~\ref{fig:complexitybdf2etf} confirms the expected geometric convergence rate of the PSD solver predicted by the theory in~\cite{feng2016preconditioned}. Moreover, the number of the interation steps in Figure~\ref{fig:complexitybdf2etf} also indicate that PNCG2 is the most efficient one and PNCG1 is better than PSD, especially when $\varepsilon$ is small.

\begin{figure}[!htp]
	\begin{center}
		\begin{subfigure}{0.32\textwidth}
			\includegraphics[width=\textwidth]{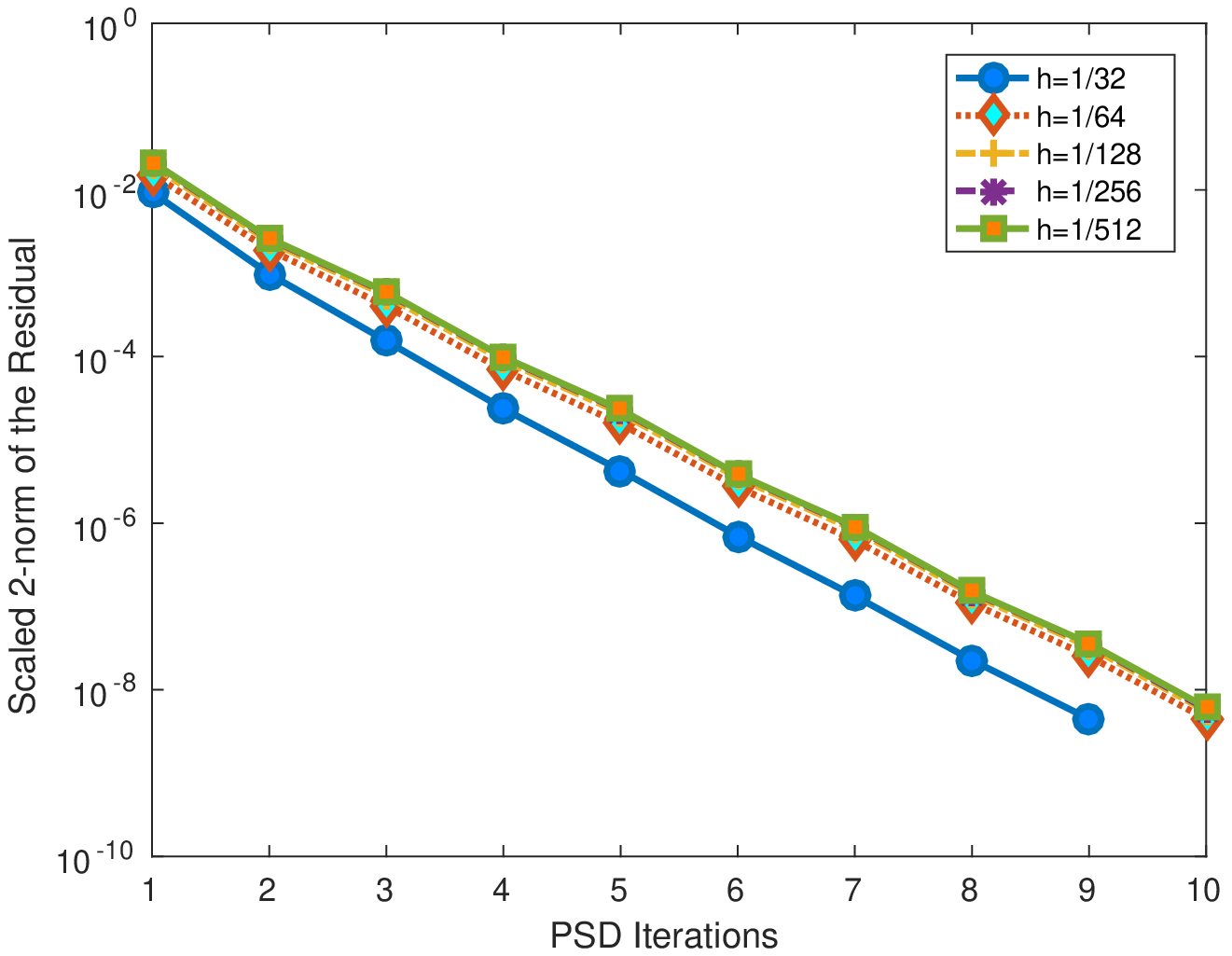}
		\end{subfigure}
		\begin{subfigure}{0.32\textwidth}
			\includegraphics[width=\textwidth]{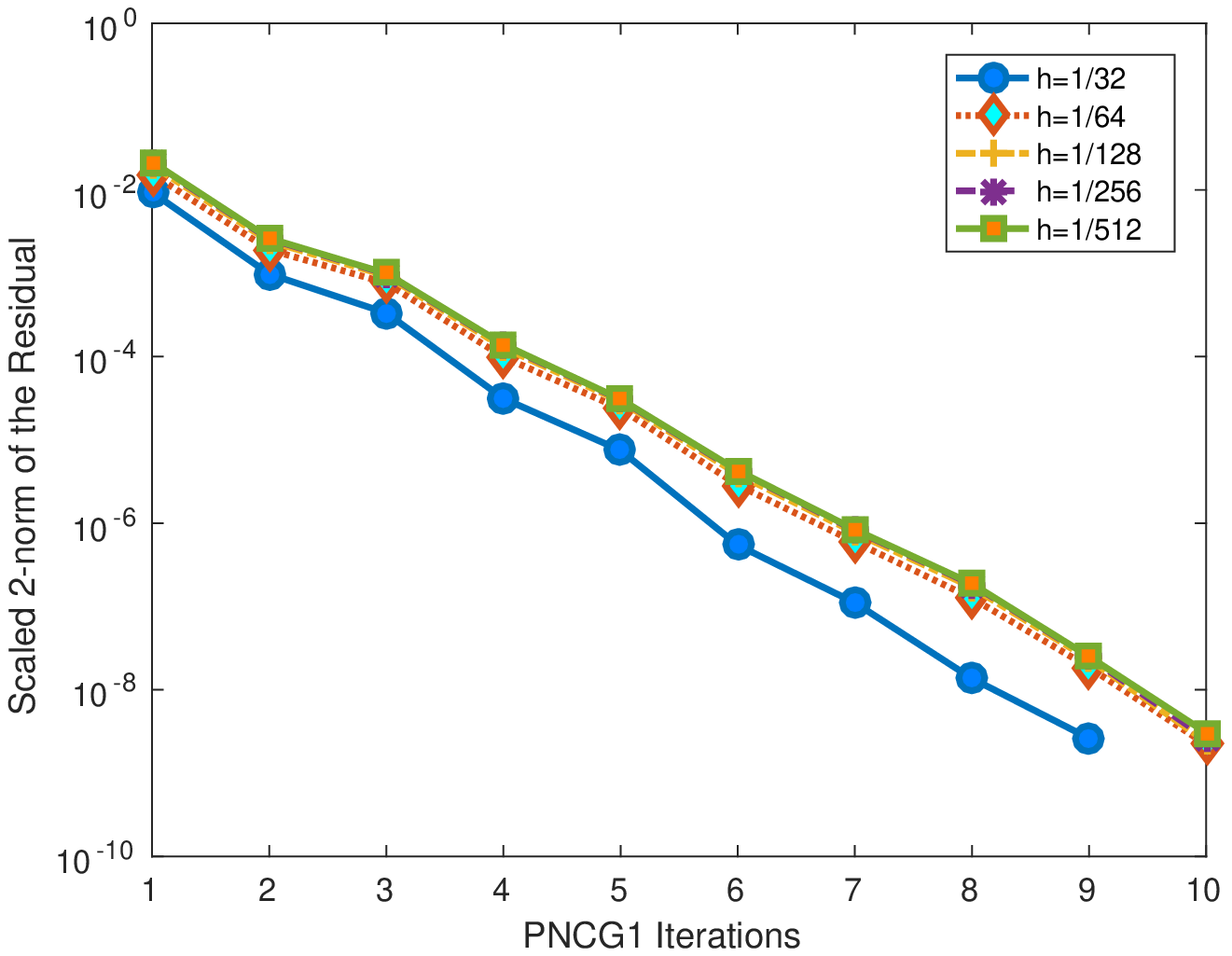} 
		\end{subfigure}
		\begin{subfigure}{0.32\textwidth}
			\includegraphics[width=\textwidth]{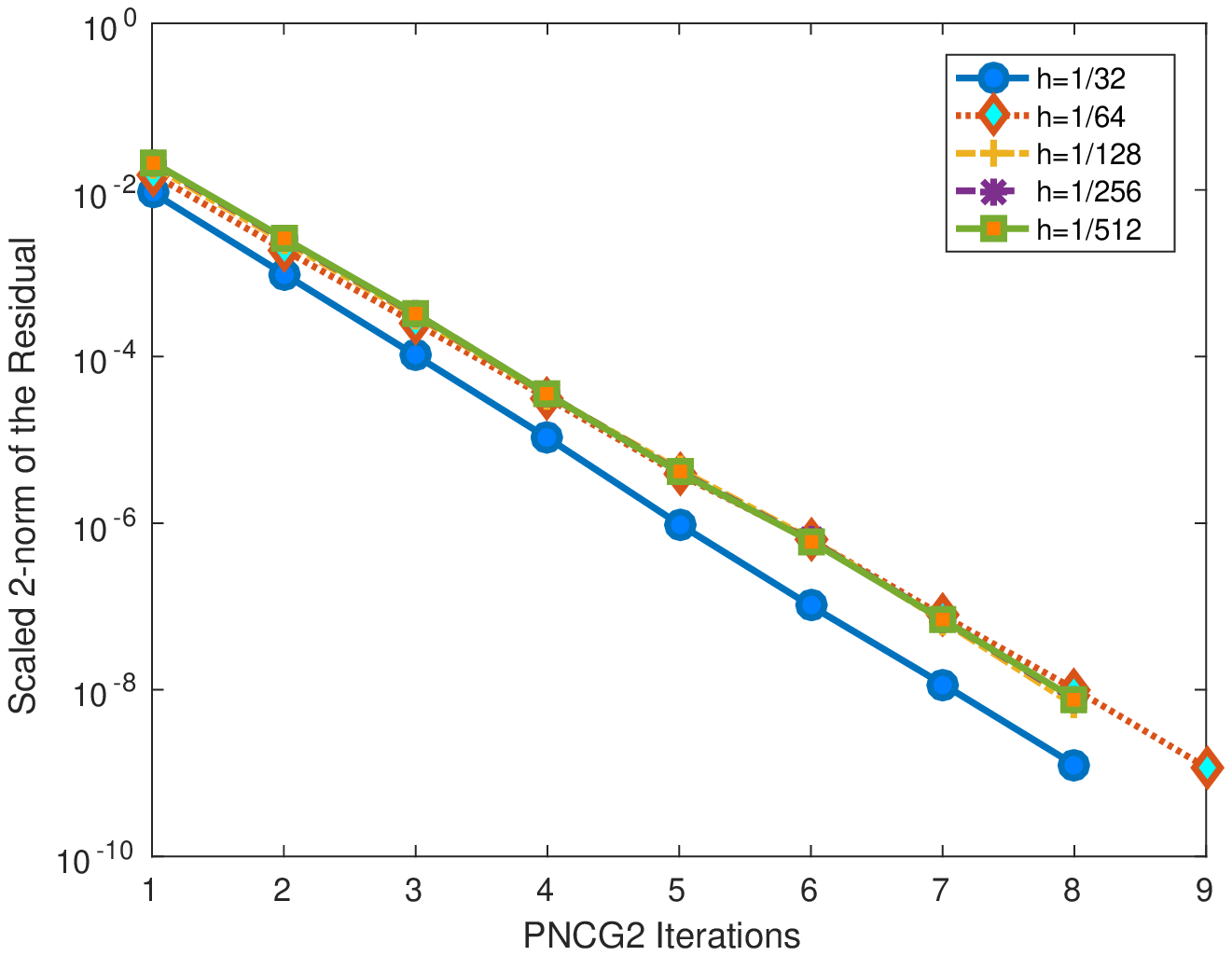} 
		\end{subfigure}	
		\begin{subfigure}{0.32\textwidth}
			\includegraphics[width=\textwidth]{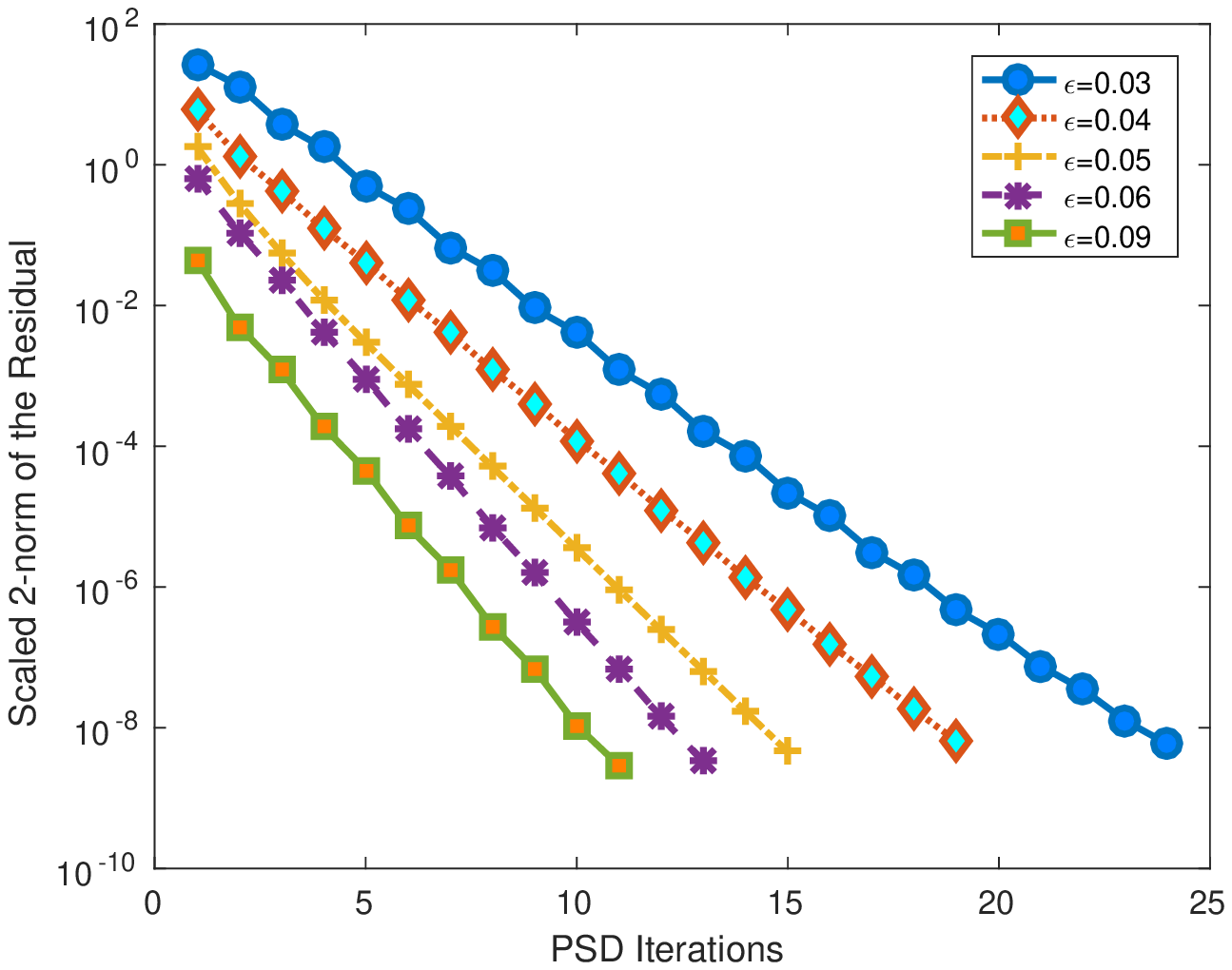}
		\end{subfigure}
		\begin{subfigure}{0.32\textwidth}
			\includegraphics[width=\textwidth]{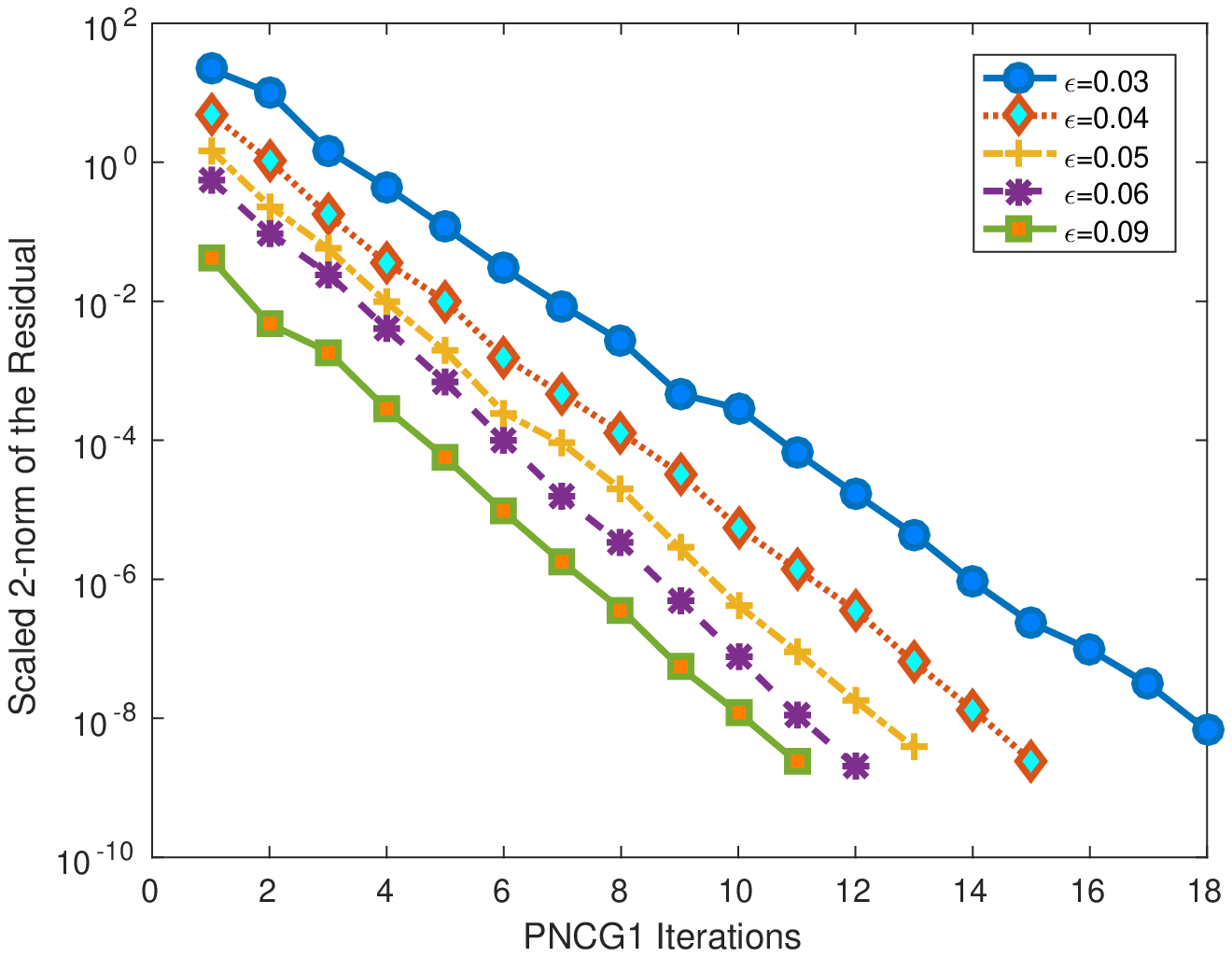} 
		\end{subfigure}
		\begin{subfigure}{0.32\textwidth}
			\includegraphics[width=\textwidth]{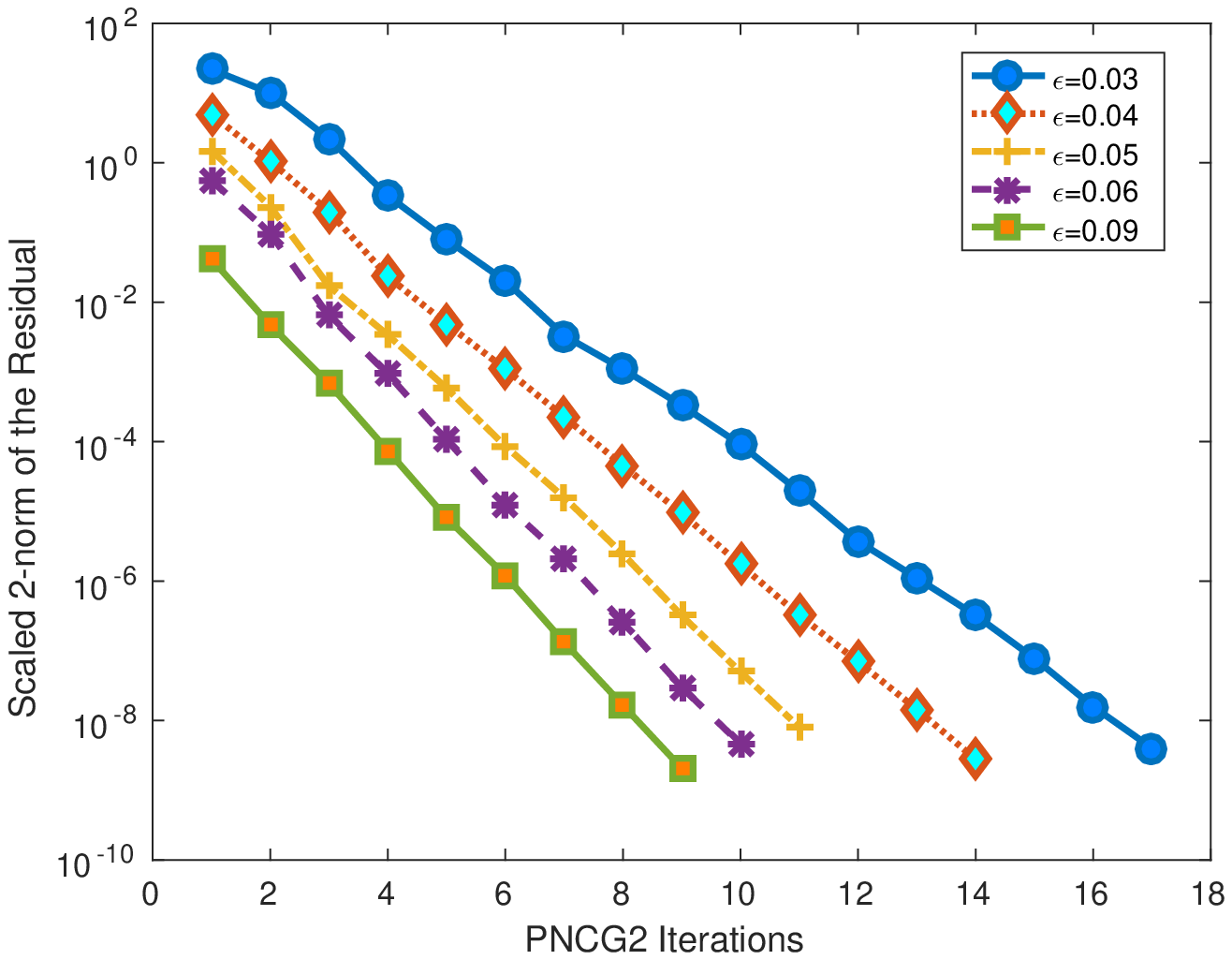} 
		\end{subfigure}				
	\end{center}
	\caption{ Complexity tests showing the solvers' performance for changing values of $h$ and $\varepsilon$. Top row: $h$-independence with $\varepsilon=0.1$; Bottom row: $\varepsilon$-dependence with $h=\nicefrac{3.2}{512}$. The rest of the parameters are given in the text. }
	\label{fig:complexitybdf2etf}
\end{figure}

In the third part of this test, we perform CPU time comparison between the proposed preconditioned solvers and the PSD solver with random initial data.
The initial data for the simulations are taken as essentially random:
\begin{equation}\label{eqn:init_random}
u^0_{i,j}=0.05\cdot(2r_{i,j}-1),
\end{equation}
where the $r_{i,j}$ are uniformly distributed random numbers in [0, 1]. The parameters for the comparison simulations are $\Omega=[0, 12.8]^2$, $\varepsilon = 3\times 10^{-2} $, $h=\nicefrac{12.8}{512}$, $\dt=0.001$ and $T= 1$. The average iteration numbers, total CPU time (in seconds) and speedups for the preconditioned methods can be found in Table~\ref{tab:cpu-fixed}.  The Table~\ref{tab:cpu-fixed} indicates that the PNCG1 solver and PNCG2 solver have provided a 1.37x and 1.45x speedup over PSD solver, respectively. 

\begin{table}[!htb]
\begin{center}
\caption{The average iteration numbers and total CPU time (in seconds) for the preconditioned methods with fixed time steps $\dt = 0.001$. Parameters are given in the text.} \label{tab:cpu-fixed}
\begin{tabular}{|c|c|c|c|}
\hline Methods & PSD & PNCG1 & PNCG2 \\
\hline  $\#_{iter}$ & 20 & 14 & 13\\
\hline  $T_{cpu}$(s) & 4406.1764 & 3212.2898  & 3035.4369 \\
\hline  Speedup & - & 1.37  & 1.45\\
\hline
\end{tabular}
\end{center}
\end{table}

In the fourth part of this test, we investigate the effect of the parameters $\dt$ and $A$ for the energy $F_h(\phi)$ with initial data \eqref{eqn:init2nd}. Since the proposed solvers give the same results, we only present the results from PSD solver in the rest of the paper. The evolutions of the energy with various time steps $\dt$ and stabilized parameter $A$ are given in Figure~\ref{fig:complexity-bdfetf-energy}. As can be seen in Figure~\ref{fig:complexity-bdfetf-energy}(a),
the larger time steps produce inaccurate or nonphysical solutions. In turn, Figure~\ref{fig:complexity-bdfetf-energy}(a) indicates the proper time steps and provides the motivation of using adaptive time stepping strategy. Figure~\ref{fig:complexity-bdfetf-energy}(b) shows that the proposed scheme and PSD solver is not that sensitive to the stabilized parameter $A$ when $A\leq1$. 

\begin{figure}[!htp]
	\begin{center}
		\begin{subfigure}{0.45\textwidth}
			\includegraphics[width=\textwidth]{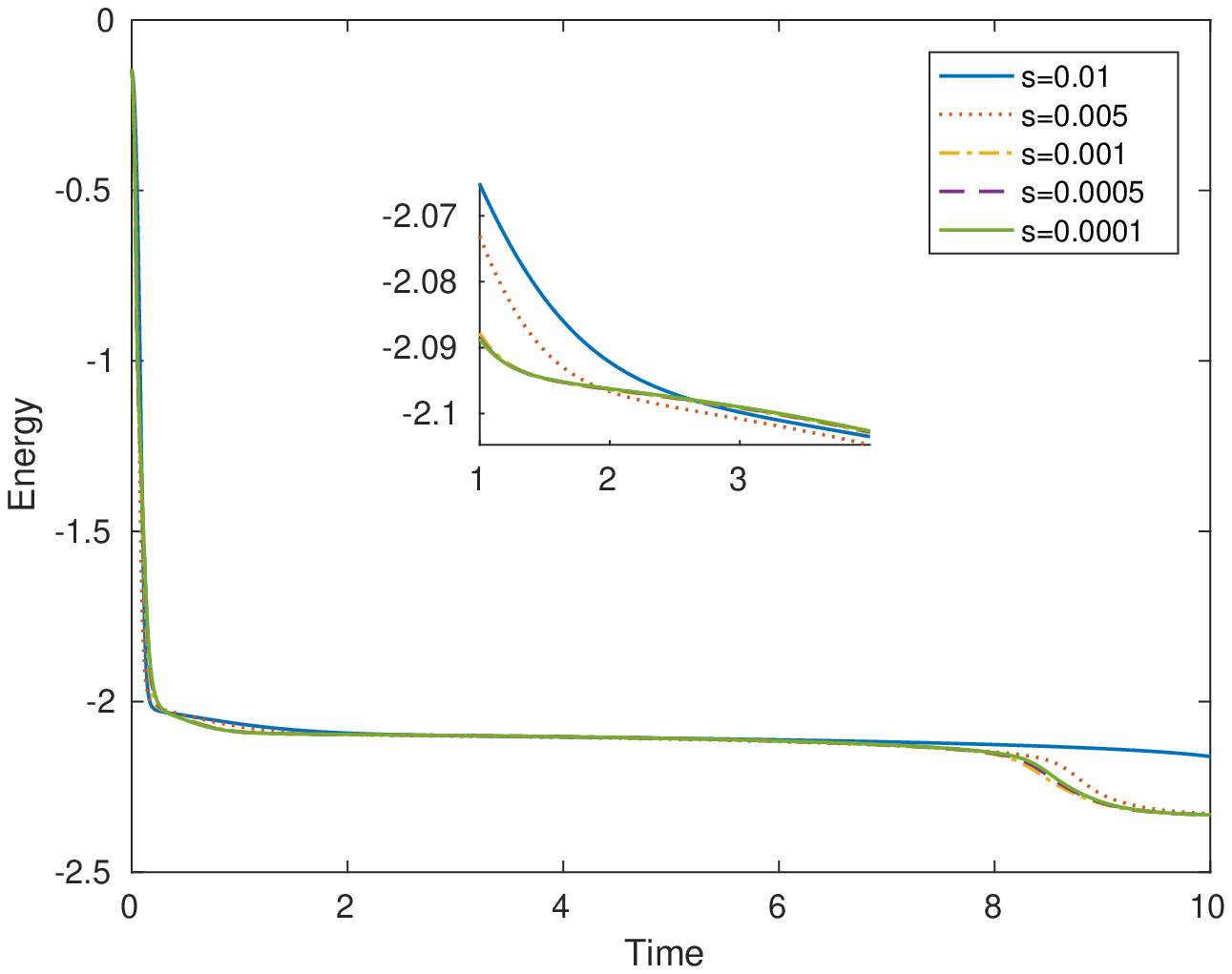}
			\caption{evolutions of energy w.r.t various $\dt$}
		\end{subfigure}
		\begin{subfigure}{0.45\textwidth}
			\includegraphics[width=\textwidth]{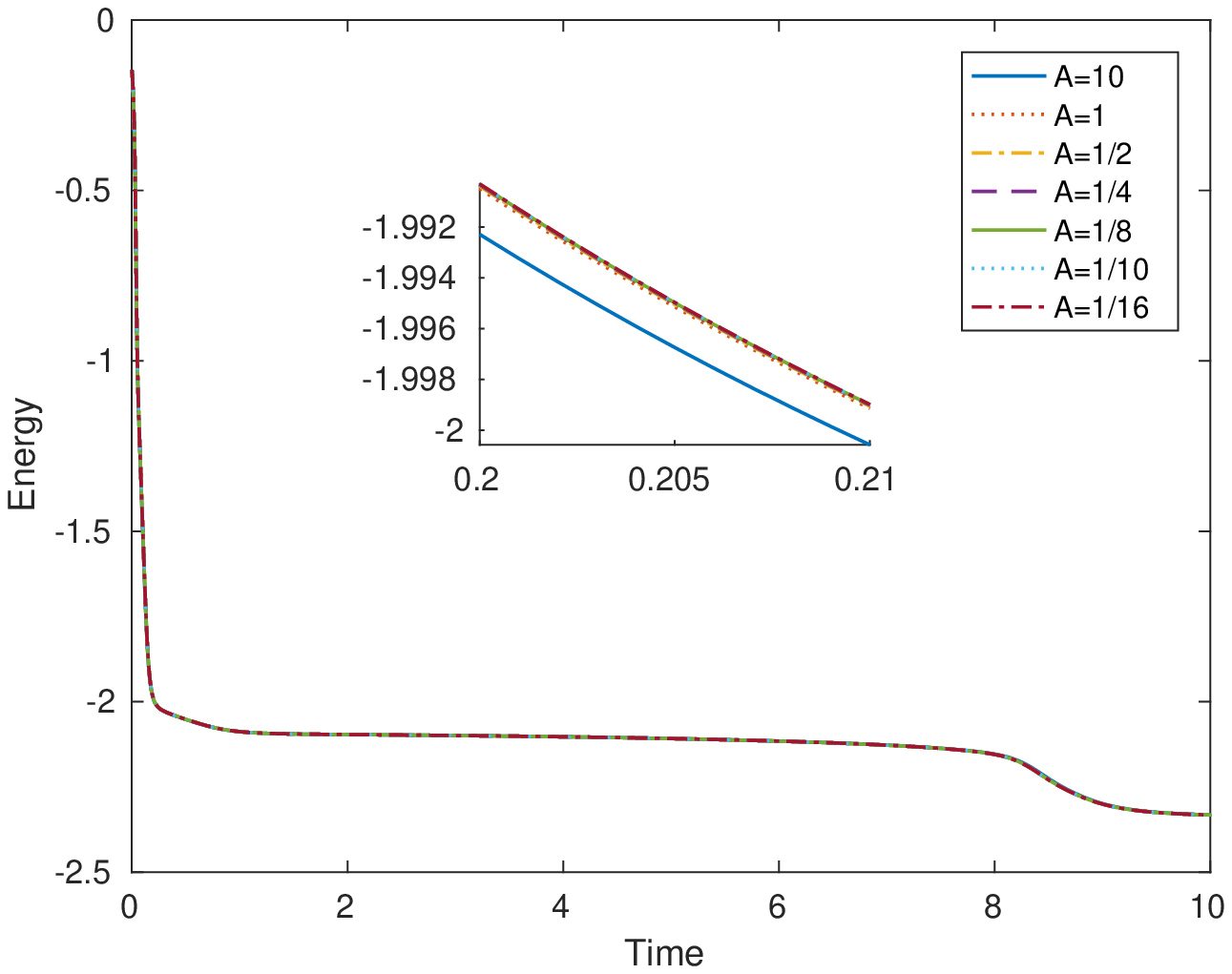} 
			\caption{evolutions of energy w.r.t various $A$}
		\end{subfigure}
	\end{center}
	\caption{ The effect of time steps $\dt$ and stabilized parameter $A$  for the energy $F_h(\phi)$. 
Left: the effect of  time step $\dt$. The other parameters are  $\Omega=[0, 3.2]^2$, $\epsilon = 3.0\times 10^{-2}$, $h=\nicefrac{3.2}{512}$, and $A=\nicefrac{1}{16}$; Right: the effect of stabilized parameter $A$. The other parameters are  $\Omega=[0, 3.2]^2$, $\epsilon = 3.0\times 10^{-2}$, $h=\nicefrac{3.2}{512}$ and $\dt = 0.001 $.  
	}
	\label{fig:complexity-bdfetf-energy}
\end{figure}

\subsection{Long-time coarsening process, energy dissipation and mass conservation }
Coarsening processes in thin film system can take place on very long time scales~\cite{kohn06}.  In this subsection, we perform  long time simulation for the SS equation.   Such a test, which has been performed in many existing literature, will  confirm the expected coarsening rates and serve as a benchmarks for the proposed solver; see, for example, \cite{feng2016preconditioned, shen2012second, wang10a}.

The initial data for this simulations are taken as \eqref{eqn:init_random}. Time snapshots of the evolution for the epitaxial thin film growth model can be found in Figure \ref{fig:long-time-psd-2nd}. The coarsening rates  are given in Figure~\ref{fig:one-third-psd-2nd}.  The interface width or roughness is defined as
 \begin{eqnarray}
W(t_n)=\sqrt{\frac{h^2}{mn}\sum_{i=1}^{m}\sum_{j=1}^{n}(\phi_{i,j}^n-\bar{\phi})^2},
\end{eqnarray}
where $m$ and $n$ are the number of the grid points in $x$ and $y$ direction and $\bar{\phi}$ is the average value of $\phi$ on the uniform grid.  The log-log plots of roughness and energy evolution and the corresponding linear regression are presented in Figure.~\ref{fig:one-third-psd-2nd}. The linear regression in Figure.~\ref{fig:one-third-psd-2nd} indicates that the surface roughness grows like $t^{1/3}$, while the energy decays like $t^{-1/3}$, which verifies the one-third power law predicted in \cite{kohn03}. More precisely, the linear fits have the form $a_et^{b_e}$ with $a_e = 3.09870, b_e = -0.33554$ for energy evolution and $a_mt^{b_m}$ with $a_m = -5.35913, b_m = 0.32555$ for roughness evolution. The linear regression is only taken up to $t = 3000$, since the saturation time would be of the order of $\varepsilon^{-2}$ under the scaling that we have adopted \cite{shen2012second}. These simulation results are consistent with earlier works on this topic in \cite{feng2016preconditioned,shen2012second, wang10a,xu06}. 
\begin{figure}[!htp]
	\begin{center}
		\begin{subfigure}{0.45\textwidth}
			\includegraphics[width=\textwidth]{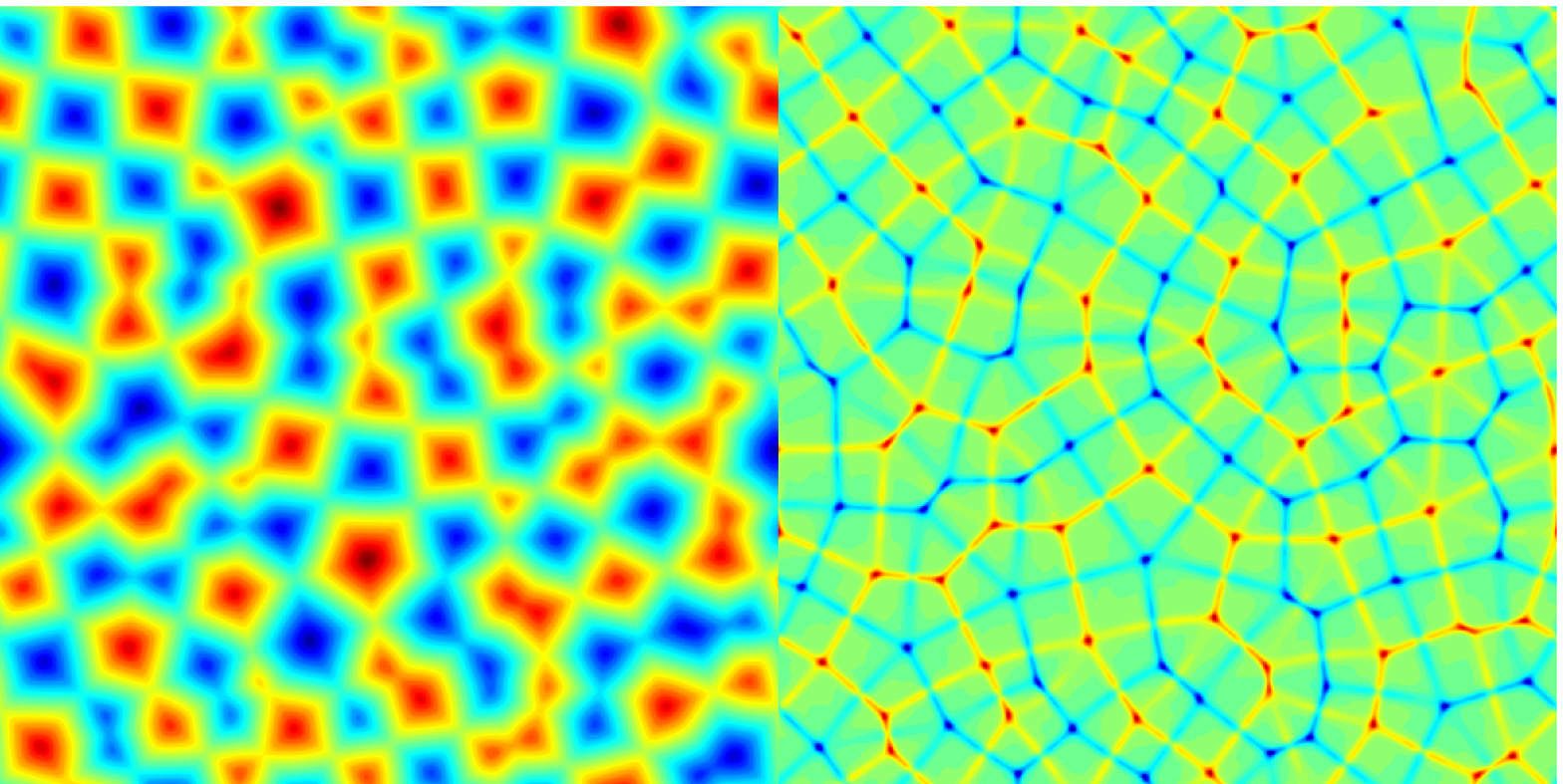} 
			\caption*{$t=10$}
		\end{subfigure}
		\begin{subfigure}{0.45\textwidth}
			\includegraphics[width=\textwidth]{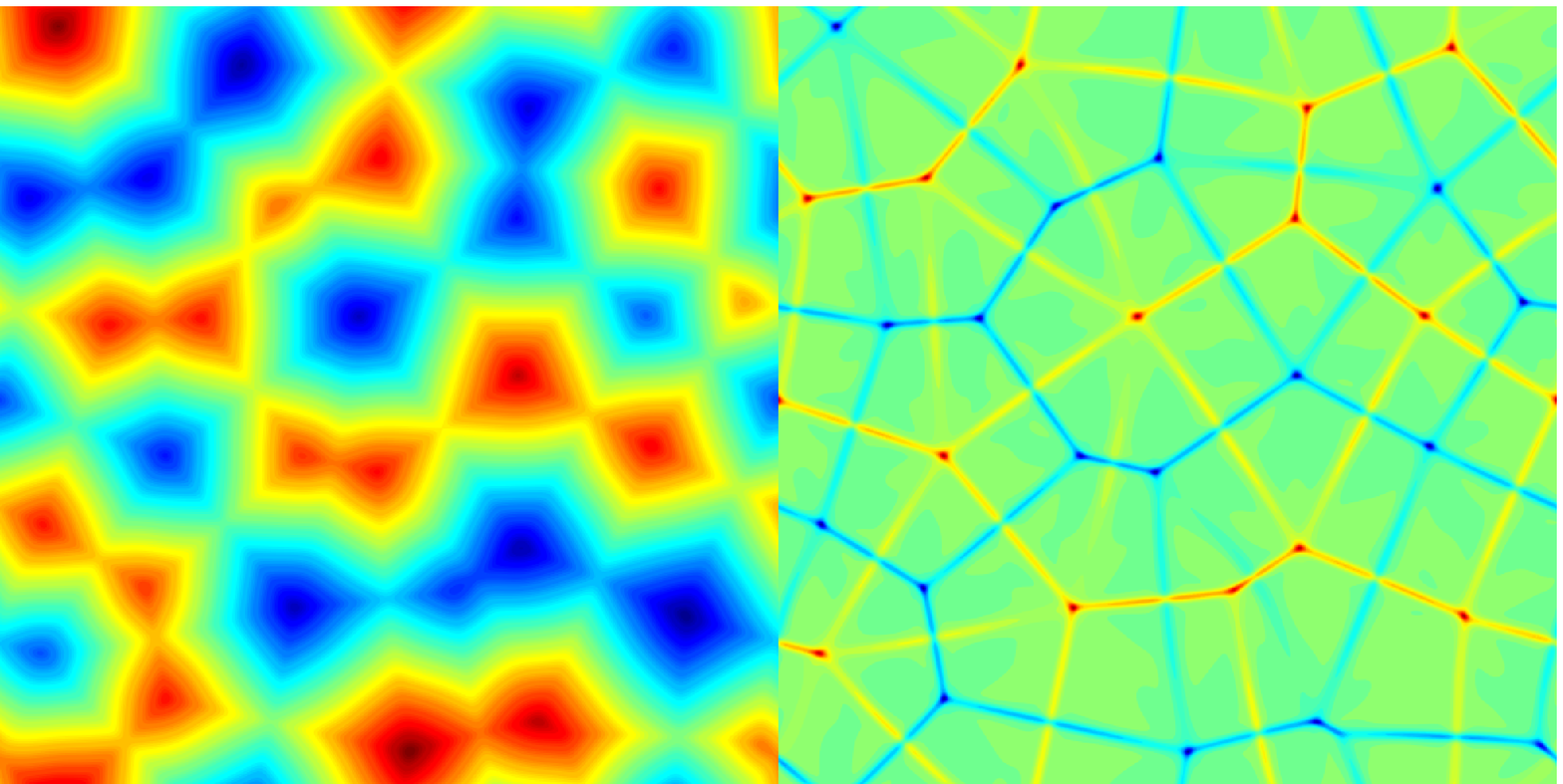}
			\caption*{$t=100$}
		\end{subfigure}
		\begin{subfigure}{0.45\textwidth}
			\includegraphics[width=\textwidth]{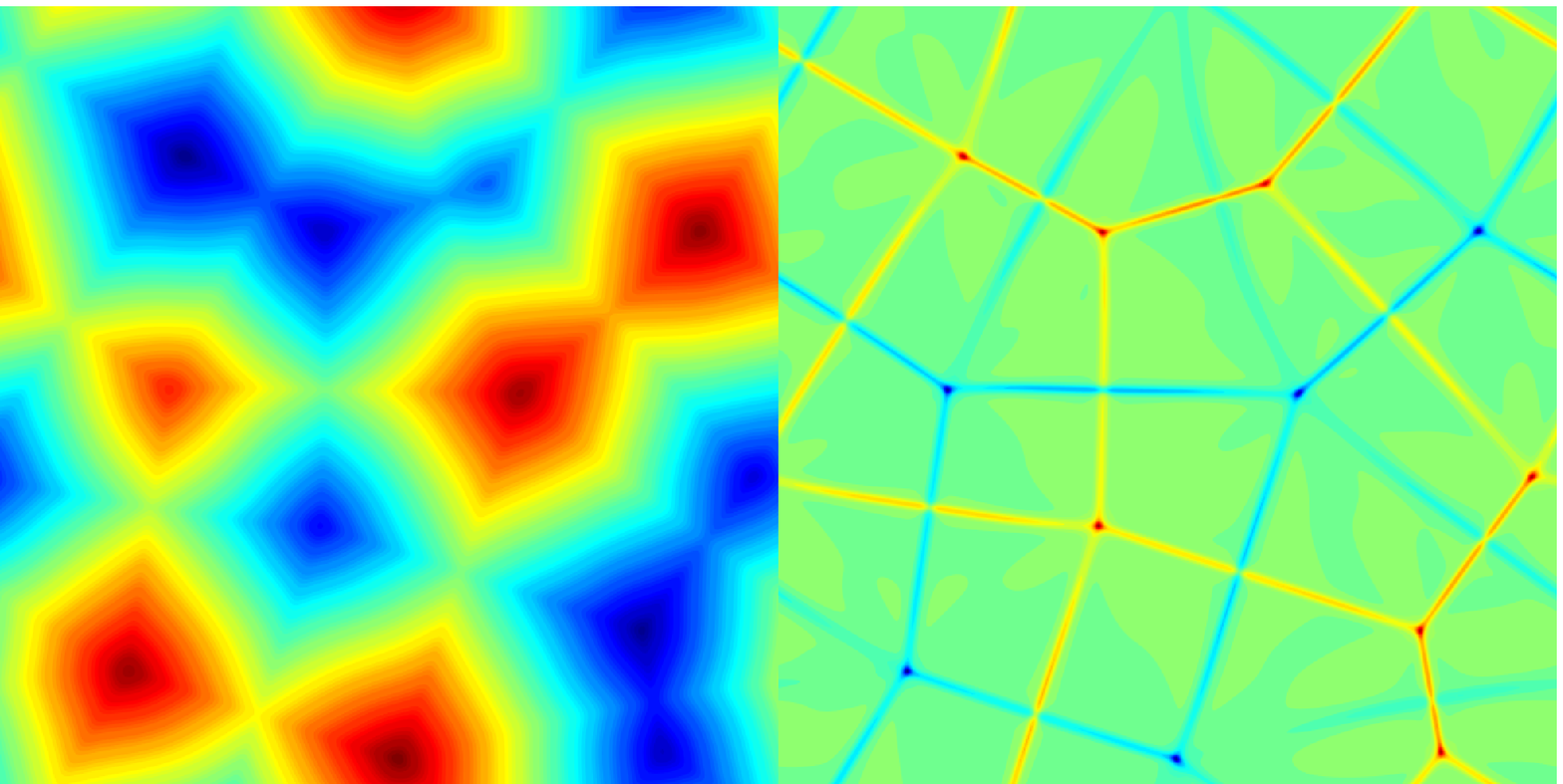} 
			\caption*{$t=500$}
		\end{subfigure}
		\begin{subfigure}{0.45\textwidth}
			\includegraphics[width=\textwidth]{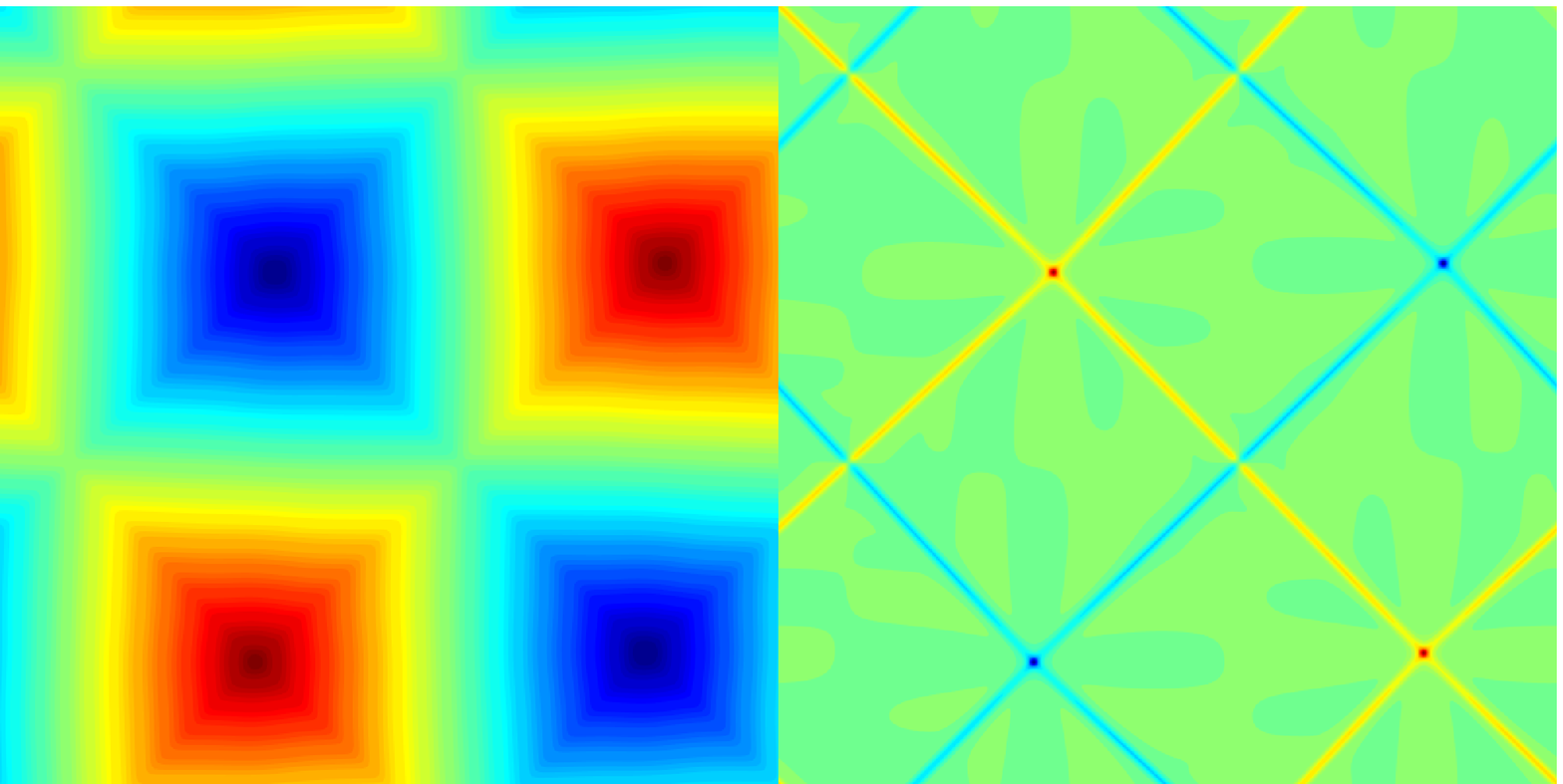}
			\caption*{$t=2000$}
		\end{subfigure}
		\begin{subfigure}{0.45\textwidth}
			\includegraphics[width=\textwidth]{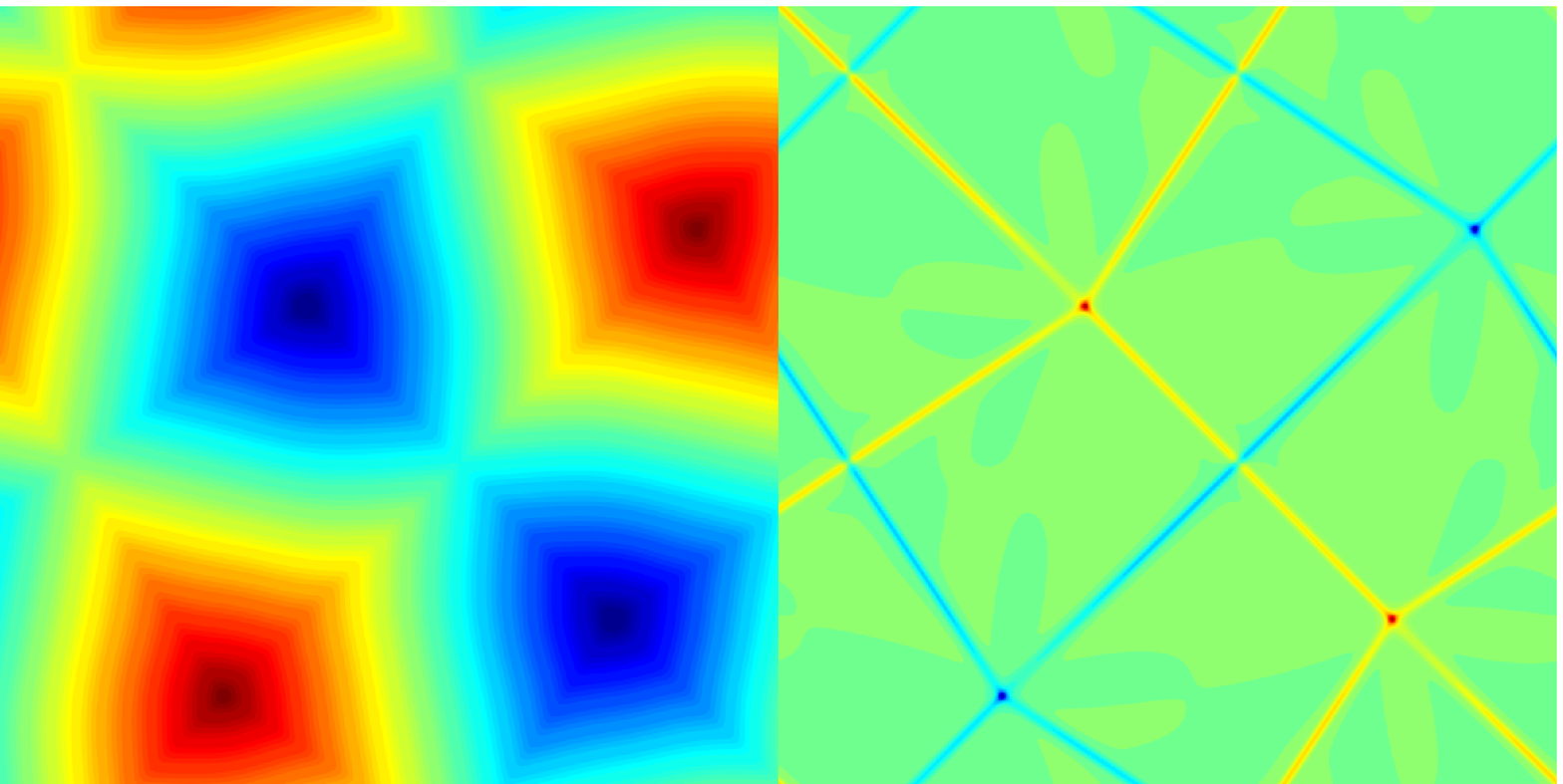} 
			\caption*{$t=4000$}
		\end{subfigure}
		\begin{subfigure}{0.45\textwidth}
			\includegraphics[width=\textwidth]{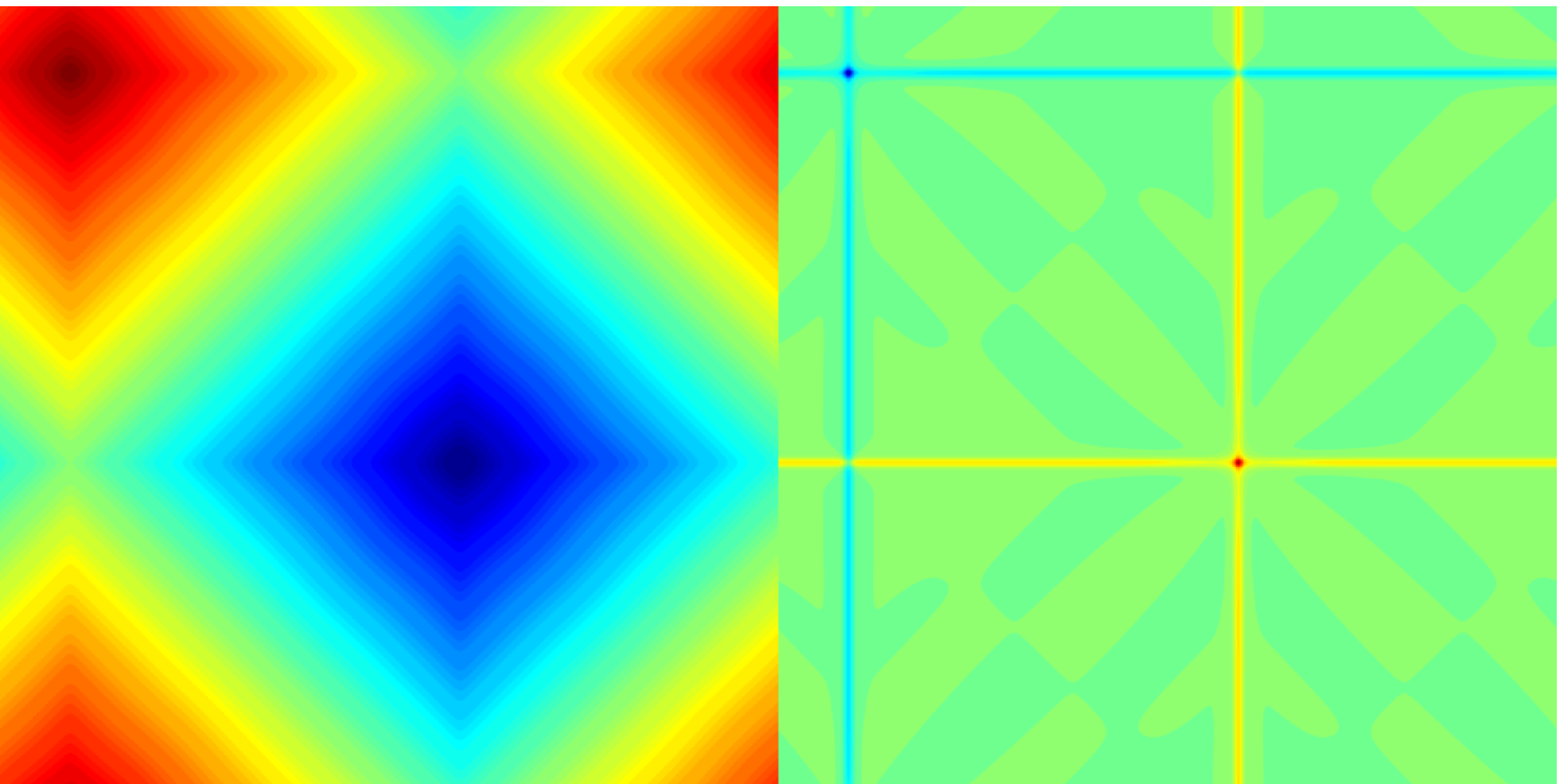}
			\caption*{$t=10000$}
		\end{subfigure}
		\caption{Time snapshots of the evolution with preconditioned solvers for the epitaxial thin film growth model at $t=10,100, 500, 2000, 4000~ \text{and}~ 10000$. Left: contour plot of $ u $, Right: contour plot of $\Delta u $. The parameters are
			$\varepsilon = 0.03, \Omega=[12.8]^2, s=0.001$, $h=\nicefrac{12.8}{512}$ and $A=\nicefrac{1}{16}$.
			These simulation results are consistent with earlier work on this topic in \cite{feng2016preconditioned,shen2012second, wang10a,xu06}.}
		\label{fig:long-time-psd-2nd}
	\end{center}
\end{figure}

\begin{figure}[!htp]
	\begin{center}
		\begin{subfigure}{0.45\textwidth}
			\includegraphics[width=\textwidth]{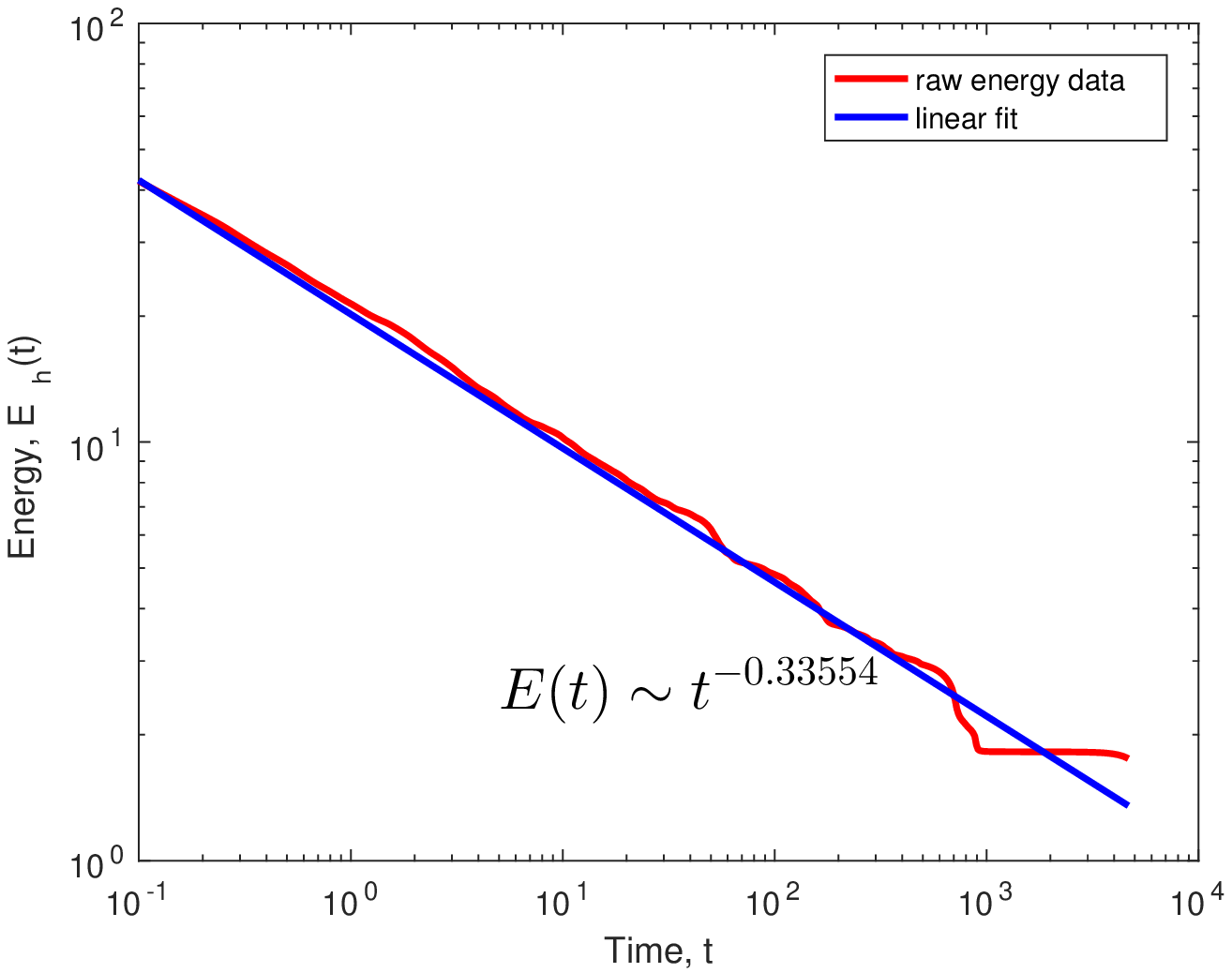}
			\caption{Energy evolution}
		\end{subfigure}
		\begin{subfigure}{0.45\textwidth}
			\includegraphics[width=\textwidth]{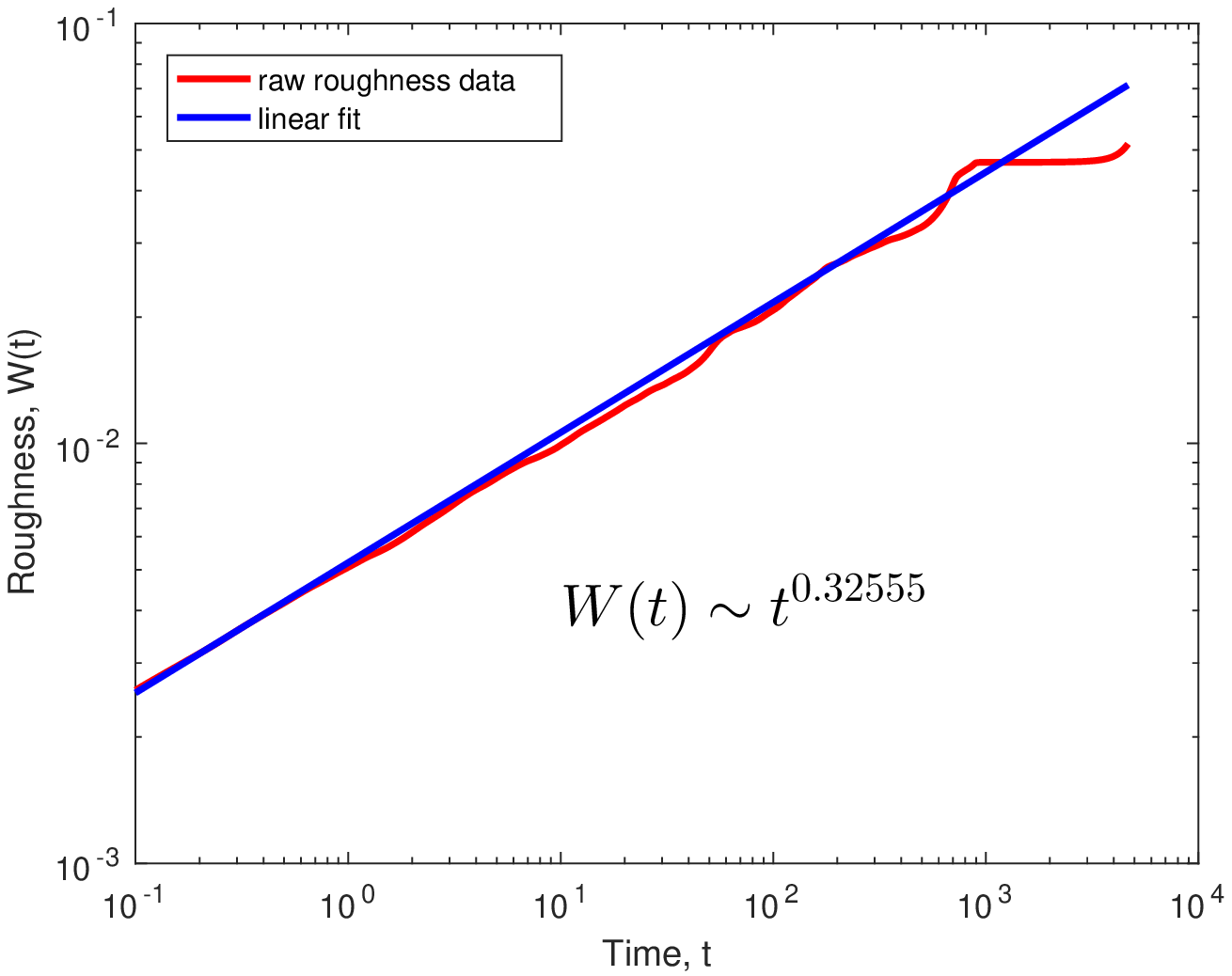} 
			\caption{Roughness evolution}
		\end{subfigure}
	\end{center}
	\caption{The log-log plots of energy and roughness evolution and the corresponding linear regression for the simulation depicted in Figure~\ref{fig:long-time-psd-2nd}. }
	\label{fig:one-third-psd-2nd}
\end{figure}

\section{Conclusions}  \label{sec:conclusion} 
In this paper, we have proposed and analyzed a second order accurate, unconditionally energy stable finite difference scheme for solving the two-dimensional epitaxial thin film  with Slope Selection (SS) equation. The unique solvability, unconditional energy stability and optimal convergence analysis have been theoretically justified. In addition, a class of efficient preconditioned methods are applied to solve the nonlinear system. This framework can be easily generalized to the higher order in time BDF schemes. Various numerical results are also presented, including the second-order-in-time accuracy test, complexity test and one-third law test. 

	\section{Acknowledgments}
 The first author would like to thank the Beijing Normal University  for support during his visit. This work is supported in part by NSF DMS-1418689 (C.~Wang), NSF DMS-1418692 (S.~Wise),  NSFC 11271048, 91130021 and the Fundamental Research Funds for the Central Universities (Z.~Zhang). 
 
 	\appendix
	
	\section{Proof of Proposition~\ref{prop:1}}
	\label{appen:A}
	
For simplicity of presentation, in the analysis of $\| \nabla_h \phi \|_6$, we are focused on the estimate of $D_x \phi \|_6$. 
Due to the periodic boundary conditions for $\phi$ and its cell-centered representation, it has a corresponding discrete Fourier transformation:
	\begin{eqnarray}
\phi_{i,j} &=& \sum^{K}_{\ell,m=-K}
\hat{\phi}^N_{\ell,m} {\rm e}^{2 \pi i ( \ell x_{i} + m y_{j} )/ L }  ,
   \label{def:Fourier-1}
	\end{eqnarray}
where $x_{i} = (i - \frac12 ) h$, $y_{j} = ( j - \frac12) h$, and $\hat{\phi}^N_{\ell,m}$ 
are discrete Fourier coefficients. Then we make its extension to a continuous function:
	\begin{equation}
	\label{def:extension-1}
\phi_{{\bf F}}(x,y) = \sum^{K}_{\ell,m=-K} \hat{\phi}^N_{\ell,m} {\rm e}^{2 \pi i ( \ell x + m y)/ L }  .
	\end{equation}
	
Similarly, we denote a grid function $f_{i+\hf,j+\hf} = \mD_x \phi_{i+\hf,j+\hf} = A_y(D_x\phi)_{i+\hf,j+\hf}$. The periodic boundary conditions for $f$ and its mesh location indicates the following discrete Fourier transformation:
	\begin{eqnarray}
f_{i+\hf,j+\hf} &=& \sum^{K}_{\ell,m=-K}
\hat{f}^N_{\ell,m} {\rm e}^{2 \pi i ( \ell x_{i+\hf} + m y_{j+\hf} )/ L } , 
   \label{def:Fourier-2}
	\end{eqnarray}
with $\hat{f}^N_{\ell,m}$ the discrete Fourier coefficients. And also, its extension to a continuous function is given by 
	\begin{equation}
	\label{def:extension-2}
f_{{\bf F}}(x,y) = \sum^{K}_{\ell,m=-K} \hat{f}^N_{\ell,m} {\rm e}^{2 \pi i ( \ell x + m y)/ L }  .
	\end{equation}
	
Meanwhile, we also observe that $\hat{\phi}^N_{0,0}=0$ and $\hat{f}^N_{0,0}=0$. The first identity comes from the fact that $\overline{\phi}=0$, while the second one is due to the fact that $\overline{f} = \overline{\mD_x \phi} =0$, for any periodic grid function $\phi$. 

The following preliminary estimates will play a very important role in the later analysis. 

\begin{lem} \label{lem:A.1}
We have 
\begin{eqnarray} 
  &&
  \| \phi \|_2 = \| \phi_{\bf F} \| ,  \label{lemma A.1-1} 
\\
  && 
  \frac{2}{\pi} \| \nabla \phi_{\bf F} \| \le \nrm{\nabla_h \phi}_2 \le \| \nabla \phi_{\bf F} \|  ,  \quad 
  \frac{4}{\pi^2} \| \Delta \phi_{\bf F} \| \le \nrm{\Delta_h \phi}_2 \le \| \Delta \phi_{\bf F} \| ,  
  \label{lemma A.1-2} 
\\
  && 
  \nrm{\partial_x f_{\bf F} } \le \nrm{\partial_x^2 \phi_{\bf F} } ,  \quad 
  \nrm{\partial_y f_{\bf F} } \le \nrm{\partial_x \partial_y \phi_{\bf F} } .  \label{lemma A.1-3} 
\end{eqnarray}    
\end{lem}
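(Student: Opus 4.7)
The plan is to exploit the diagonalizing action of the discrete Fourier transform. Every operator that appears — $D_x$, $D_y$, $A_x$, $A_y$, $\Delta_h$, and hence $\mD_x = A_y D_x$ — is a constant-coefficient, translation-invariant stencil, so it acts on each Fourier mode $e^{2\pi i(\ell x + m y)/L}$ by multiplication with a scalar symbol. The corresponding continuous operator $\partial_x$, $\partial_y$, $\Delta$ acts on the same mode by its own symbol. Thus, via Parseval on both sides, each claimed inequality reduces to a pointwise comparison of symbols at each wavenumber $(\ell,m)$. The wavenumber range $|\ell|,|m|\le K$ together with $L = Nh$ and $N=2K+1$ keeps the argument $\pi\ell h/L = \pi\ell/N$ inside $[-\pi/2,\pi/2]$, which is exactly the range where $\tfrac{2}{\pi}|x| \le |\sin x|\le |x|$ holds.

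Identity \eqref{lemma A.1-1} is immediate once the discrete orthogonality of the cell-centered exponentials is recorded: both $\|\phi\|_2^2$ and $\|\phi_{\mathbf F}\|^2$ equal $L^2\sum_{\ell,m}|\hat\phi^N_{\ell,m}|^2$. For \eqref{lemma A.1-2}, after the half-cell shift the symbol of $D_x$ on an east-edge mode is $\tfrac{2i\sin(\pi\ell h/L)}{h}$, and that of $\partial_x$ is $\tfrac{2\pi i\ell}{L}$. Writing $\xi = \pi\ell h/L$, the ratio of magnitudes equals $\sin\xi/\xi$, which lies in $[2/\pi,1]$ on $[-\pi/2,\pi/2]$; summing the squared bounds with $|\hat\phi^N_{\ell,m}|^2$ and adding the analogous $y$-contribution yields the gradient estimate. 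The Laplacian bound is the same argument squared: the symbol of $\Delta_h$ is $-\tfrac{4}{h^2}\bigl(\sin^2(\pi\ell h/L)+\sin^2(\pi m h/L)\bigr)$ versus $-\tfrac{4\pi^2}{L^2}(\ell^2+m^2)$ for $\Delta$, so the constants $(2/\pi)^2=4/\pi^2$ and $1$ appear as claimed.

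For \eqref{lemma A.1-3}, one first reads off the symbol of $\mD_x = A_y D_x$ applied to a cell-centered function: the $D_x$ half-shift contributes $\tfrac{2i\sin(\pi\ell h/L)}{h}$, and the $A_y$ half-shift contributes $\cos(\pi m h/L)$, giving
\[
  \hat f^N_{\ell,m} \;=\; \frac{2i\sin(\pi\ell h/L)}{h}\,\cos(\pi m h/L)\,\hat\phi^N_{\ell,m}.
\]
Applying $\partial_x$ to $f_{\mathbf F}$ multiplies by $\tfrac{2\pi i\ell}{L}$, while $\partial_x^2\phi_{\mathbf F}$ contributes $-\tfrac{4\pi^2\ell^2}{L^2}$. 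By Parseval, $\|\partial_x f_{\mathbf F}\|\le \|\partial_x^2\phi_{\mathbf F}\|$ reduces, mode by mode, to
\[
  \sin^2(\pi\ell h/L)\,\cos^2(\pi m h/L)\;\le\;\left(\tfrac{\pi\ell h}{L}\right)^2,
\]
which is just $|\sin x|\le|x|$ combined with $\cos^2\le 1$. Matching $\tfrac{2\pi i m}{L}$ against $-\tfrac{4\pi^2\ell m}{L^2}$ reduces the mixed-derivative bound to exactly the same inequality, so both estimates in \eqref{lemma A.1-3} follow simultaneously.

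The only delicate point is bookkeeping: one must track the centering of each intermediate grid function (cell, east-edge, north-edge, vertex) and the half-cell phase shift it introduces, so that the symbol of a composite operator like $\mD_x$ comes out as a product of a pure-imaginary sine term and a real cosine term rather than some spurious mixture. Once those symbols are tabulated correctly, the rest is Parseval plus the elementary bound $|\sin x|\le|x|$, and the range restriction $|\ell|,|m|\le K$ that keeps us inside the region where $|\sin x|\ge \tfrac{2}{\pi}|x|$ for the lower bounds in \eqref{lemma A.1-2}.
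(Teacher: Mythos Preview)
Your proposal is correct and follows essentially the same strategy as the paper's proof: diagonalize the discrete and continuous operators via the Fourier expansion, apply Parseval at both levels, and reduce each estimate to a pointwise comparison of symbols using $\tfrac{2}{\pi}|x|\le|\sin x|\le|x|$ on $[-\pi/2,\pi/2]$ together with $|\cos|\le 1$. The only minor difference is that for the gradient part of \eqref{lemma A.1-2} you work directly with the $D_x$ symbol (matching the $\nabla_h$ in the statement), whereas the paper's write-up computes the $\mD_x$ symbol with its extra cosine factor; your version is actually cleaner for the lower bound.
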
 

	\begin{proof}
Parseval's identity (at both the discrete and continuous levels) implies that
\begin{eqnarray}
   \sum^{N-1}_{i,j=0}|\phi_{i,j}|^2 =  N^2 \sum^{K}_{\ell,m=-K} 
|\hat{\phi}^N_{\ell,m,n}|^2,  \quad 
  \nrm{\phi_{\bf F}}^2 = L^2  \sum^{K}_{\ell,m=-K}
|\hat{\phi}^N_{\ell,m}|^2. \label{lemma A.1-0-2} 
\end{eqnarray}
Based on the fact that $h N = L$, this in turn results in
\begin{equation}
\nrm{\phi}^2_{2} = \nrm{\phi_{{\bf F}}}^2 =  L^2 \sum^{K}_{\ell,m=-K}|\hat{\phi}^N_{\ell,m}|^2 ,  \label{lemma A.1-5}
\end{equation}
so that (\ref{lemma A.1-1}) is proven. 

For the comparison between $f = \mD_x \phi$ and $\partial_x \phi_{\bf F}$, we look at the following Fourier expansions:
	\begin{eqnarray}
f_{i+\hf,j+\hf} &=& \frac{\phi_{i+1,j}-\phi_{i,j} + \phi_{i+1,j+1} - \phi_{i,j+1}}{2h} 
 = \sum^{K}_{\ell,m=-K} \mu_{\ell,m} \hat{\phi}^N_{\ell,m}  {\rm e}^{2 \pi i  ( \ell x_{i+\hf} + m y_{j+\hf} )/ L }  ,  \label{lemma A.1-6}
	\\
f_{\bf F} (x,y) &=& \sum^{K}_{\ell,m=-K} \mu_{\ell,m} \hat{\phi}^N_{\ell,m}  {\rm e}^{2 \pi i  ( \ell x + m y )/ L }  ,  \label{lemma A.1-7}
\\	
  \partial_x \phi_{{\bf F}} (x,y) &=& \sum^{K}_{\ell,m=-K}  
  \nu_{\ell} \hat{\phi}^N_{\ell,m} {\rm e}^{2 \pi i ( \ell x + m y ) / L } , \label{lemma A.1-8}  
	\end{eqnarray}
with
	\begin{equation}
\mu_{\ell,m} = -\frac{2 i \sin{\frac{\ell\pi h}{L}}}{h} \cos (m \pi h) , \quad
\nu_{\ell} = -\frac{2 \ell\pi i}{L}. \label{lemma A.1-9}
\end{equation}
A comparison of Fourier eigenvalues between $|\mu_{\ell, m}|$ and $|\nu_{\ell}|$ shows that
\begin{equation}
\frac{2}{\pi} |\nu_{\ell}| \le |\mu_{\ell, m}| \le |\nu_{\ell}|, 
\quad \rm{for}  \quad -K \le \ell, m \le K , \label{lemma A.1-10}
\end{equation}
which in turn leads to 
\begin{eqnarray} 
  \frac{2}{\pi} \| \partial_x \phi_{\bf F} \| \le \nrm{\mD_x \phi}_2 \le \| \partial_x \phi_{\bf F} \|  .   
  \label{lemma A.1-10-2} 
\end{eqnarray} 
A similar estimate could also be derived: 
\begin{eqnarray} 
  \frac{2}{\pi} \| \partial_y \phi_{\bf F} \| \le \nrm{\mD_y \phi}_2 \le \| \partial_y \phi_{\bf F} \|  .   
  \label{lemma A.1-10-3} 
\end{eqnarray}
A combination of \eqref{lemma A.1-10-2} and \eqref{lemma A.1-10-3} yields the first inequality of (\ref{lemma A.1-2}).  

For the second estimate of (\ref{lemma A.1-2}), we look at similar Fourier expansions: 
	\begin{eqnarray}
(\Delta_h \phi)_{i,j} &=& \sum^{K}_{\ell,m=-K} \left( \mu_{\ell}^2 + \mu_{m}^2 \right) \hat{\phi}^N_{\ell,m}  {\rm e}^{2 \pi i  ( \ell x_{i} + m y_{j} )/ L }  ,  \label{lemma A.1-11}
	\\	
  \Delta \phi_{{\bf F}} (x,y) &=& \sum^{K}_{\ell,m=-K}  
  \left( \nu_{\ell}^2 + \nu_m^2 \right) \hat{\phi}^N_{\ell,m} {\rm e}^{2 \pi i ( \ell x + m y ) / L } ,  \label{lemma A.1-12}  
	\end{eqnarray}
with $\mu_{\ell} = -\frac{2 i \sin{\frac{\ell\pi h}{L}}}{h}$, $\mu_{m} = -\frac{2 i \sin{\frac{m \pi h}{L}}}{h}$. It is also clear that $\frac{2}{\pi} |\nu_{\ell}| \le |\mu_{\ell}| \le |\nu_{\ell}|$, for any $-K \le \ell \le K$. In turn, an application of Parseval's identity yields
\begin{eqnarray}
\nrm{\Delta_h \phi}^2_2 = L^2 \sum^{K}_{\ell,m=-K}\left| \mu_{\ell}^2 + \mu_m^2 \right|^2 |\hat{\phi}^N_{\ell,m}|^2,  \label{lemma A.1-13-1}  \\
\nrm{\Delta \phi_{\bf F}}^2 =
 L^2 \sum^{K}_{\ell,m=-K} \left| \nu_{\ell}^2 + \nu_m^2 \right|^2
  |\hat{\phi}^N_{\ell,m}|^2. \label{lemma A.1-13-2} 
\end{eqnarray}
The eigenvalue comparison estimate (\ref{lemma A.1-10}) implies the following inequality: 
\begin{equation}
\frac{4}{\pi^2} \left| \nu_{\ell}^2 + \nu_m^2 \right| \le \left| \mu_{\ell}^2 + \mu_m^2 \right| \le \left| \nu_{\ell}^2 + \nu_m^2 \right|, 
\quad \rm{for}  \quad -K \le \ell, m  \le K . \label{lemma A.1-14} 
\end{equation}
As a result, inequality (\ref{lemma A.1-2}) comes from a combination of (\ref{lemma A.1-13-1}), (\ref{lemma A.1-13-2}) and (\ref{lemma A.1-14}). 

For the estimate (\ref{lemma A.1-3}), we observe the following Fourier expansions: 
	\begin{eqnarray}
 \partial_x f_{\bf F} (x,y) &=& \sum^{K}_{\ell,m=-K} \nu_{\ell} \mu_{\ell,m} \hat{\phi}^N_{\ell,m}  {\rm e}^{2 \pi i  ( \ell x + m y )/ L }  ,  \label{lemma A.1-15}
	\\
\partial_x^2 \phi_{\bf F} (x,y) &=& \sum^{K}_{\ell,m=-K}  
  \nu_{\ell}^2 \hat{\phi}^N_{\ell,m} {\rm e}^{2 \pi i ( \ell x + m y ) / L } , \label{lemma A.1-16}  
	\end{eqnarray}
which in turn leads to (with an application of Parseval's identity) 
\begin{eqnarray}
\nrm{\partial_x f_{\bf F}}^2 = L^2 \sum^{K}_{\ell,m=-K}\left| \nu_\ell \mu_{\ell,m} \right|^2 |\hat{\phi}^N_{\ell,m}|^2,  \label{lemma A.1-17-1}  \\
\nrm{\partial_x^2 \phi_{\bf F}}^2 =
 L^2 \sum^{K}_{\ell,m=-K} | \nu_{\ell} |^4 |\hat{\phi}^N_{\ell,m}|^2. \label{lemma A.1-17-2} 
\end{eqnarray}
Similarly, the following inequality could be derived, based on the eigenvalue comparison estimate (\ref{lemma A.1-10}): 
\begin{equation}
  \left| \nu_\ell \mu_{\ell,m} \right|^2 \le | \nu_\ell |^4 ,   
\quad \rm{for}  \quad -K \le \ell, m  \le K . \label{lemma A.1-18} 
\end{equation}
Consequently, a combination of (\ref{lemma A.1-17-1}), (\ref{lemma A.1-17-2}) and (\ref{lemma A.1-18}) leads to the first inequality in (\ref{lemma A.1-3}). The second inequality, $\nrm{\partial_y f_{\bf F} } \le \nrm{\partial_x \partial_y \phi_{\bf F} }$, could be derived in the same manner. The proof of Lemma~\ref{lem:A.1} is complete.   
\end{proof} 

With the estimates in Lemma~\ref{lem:A.1}, we are able to make the following derivations: 
\begin{eqnarray} 
  &&
   \| \phi \|_{H_{h}^2}^2 = \| \phi \|_2^2 + \| \nabla_h \phi \|_2^2 + \| \Delta_h \phi \|_2^2  
   \le \| \phi_{\bf F} \|^2 + \| \nabla \phi_{\bf F} \|^2 + \| \Delta \phi_{\bf F} \|^2  
   \le \| \phi_{\bf F} \|_{H_{h}^2}^2 ,  \label{prop 1-1-1}   
\\
  &&
   \| \phi_{\bf F} \|_{H_{h}^2}^2 \le B_0 \| \Delta \phi_{\bf F} \|^2  ,  \quad 
   \mbox{(elliptic regularity, since $\int_\Omega \, \phi_{\bf F} \, d {\bf x} =0$)}, \label{prop 1-1-2}    
\\
  &&
   \mbox{so that} \, \, \, \nrm{\Delta_h \phi}_2^2  \ge \frac{4}{\pi^2} \nrm{\Delta \phi_{\bf F} }^2 \ge \frac{4}{\pi^2 B_0} \| \phi_{\bf F} \|_{H_{h}^2}^2  \ge \frac{4}{\pi^2 B_0} \| \phi \|_{H_{h}^2}^2 , \label{prop 1-1-3}     
\end{eqnarray} 
so that \eqref{prop 1-0-1} (in Proposition~\ref{prop:1}) is proved with $C_1 = \frac{4}{\pi^2 B_0}$. 

Inequality \eqref{prop 1-0-2} could be proved in a similar way. The following fact is observed: 
\begin{eqnarray} 
  \| \phi \|_\infty \le \| \phi_{\bf F} \|_{L^\infty} \le C \| \phi_{\bf F} \|_{H_{h}^2} \le C \| \phi \|_{H_{h}^2} ,   \label{prop 1-2}     
\end{eqnarray} 
in which the first step is based on the fact that, $\phi$ is the grid interpolation of the continuous function $\phi_{\bf F}$, the second step comes from the Sobolev embedding, while the last step comes from the the estimates in Lemma~\ref{lem:A.1}.  
 
For the proof of \eqref{prop 1-0-3}, the last inequality in Proposition~\ref{prop:1}, the following lemma is needed, which gives a bound of the discrete $\ell^p$ (with $p=4, 6$) norm of the grid functions $\phi$ and $f$, in terms of the continuous $L^p$ norm of its continuous version $f_{\bf F}$. 

\begin{lem} \label{lem:A.2} 
  For $\phi \in {\mathcal C}_{\rm per}$, $f \in {\mathcal V}_{\rm per}$,  we have 
\begin{eqnarray} 
  \| \phi \|_p \le \sqrt{\frac{p}{2}} \| \phi_{\bf F} \|_{L^p} ,  \quad 
  \| f \|_p \le \sqrt{\frac{p}{2}} \| f_{\bf F} \|_{L^p} ,  \quad \mbox{with $p = 4, 6$} .  \label{lemma A.2-0} 
\end{eqnarray} 
\end{lem}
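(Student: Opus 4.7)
The plan is to reduce the $\ell^p$-estimate to a discrete Parseval identity combined with a Fourier aliasing bound. Since $p$ is even and $\phi$ is real, I would introduce the auxiliary grid function $\psi_{i,j} := \phi_{i,j}^{p/2}$, so that
\[
  \nrm{\phi}_p^p = h^2 \sum_{i,j} \phi_{i,j}^p = h^2 \sum_{i,j} \psi_{i,j}^2 = \nrm{\psi}_2^2.
\]
The discrete Parseval identity used in \eqref{lemma A.1-5} then gives $\nrm{\psi}_2 = \nrm{\psi_{\bf F}}$, where $\psi_{\bf F}$ is the continuous trigonometric extension of $\psi$ built from its discrete Fourier coefficients. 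The task therefore reduces to comparing $\nrm{\psi_{\bf F}}$ with $\nrm{\phi_{\bf F}^{p/2}} = \nrm{\phi_{\bf F}}_{L^p}^{p/2}$.

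The key observation is that $\psi$ is the grid restriction of the continuous function $g := \phi_{\bf F}^{p/2}$, which is itself a real trigonometric polynomial of degree at most $(p/2)K$ in each variable. The standard aliasing identity yields
\[
  \hat{\psi}^N_{\ell,m} = \sum_{j_1,\,j_2\in\mathbb{Z}} \hat{g}_{\ell+j_1 N,\, m+j_2 N},
\]
with only finitely many nonzero terms (those for which the shifted index lies in the Fourier support of $g$). A Cauchy--Schwarz bound on this alias sum, combined with summation over $(\ell,m)\in[-K,K]^2$ and continuous Parseval applied to $g$, yields
\[
  \nrm{\psi}_2^2 \le M_{\max}\,\nrm{g}_{L^2}^2 = M_{\max}\,\nrm{\phi_{\bf F}}_{L^p}^p,
\]
where $M_{\max}$ is the largest number of aliases that can contribute to any single output mode. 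Here the summation exchange works because each index $(\ell',m')$ in the support of $\hat{g}$ appears in the double sum at most once.

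The last step is an alias count. In each coordinate direction, the admissible shifts $j$ with $|\ell+jN|\le (p/2)K$ lie in an interval of length $pK/N < p/2$, and thus number at most $p/2$. Consequently $M_{\max}\le (p/2)^2$ in two dimensions. Taking $p$-th roots and noting that $(p/2)^{2/p}\le (p/2)^{1/2}=\sqrt{p/2}$ for $p\ge 4$ completes the estimate for $\phi$. The proof for $f\in\mathcal{V}_{\rm per}$ is structurally identical: the only change is that the cell-centered grid nodes are replaced by vertex-centered ones, while the underlying Fourier lattice, the degree bound on $f_{\bf F}^{p/2}$, and the alias count are all unchanged. The only mildly technical point is the two-dimensional alias bookkeeping; the Cauchy--Schwarz step is not sharp, which is consistent with the small amount of slack in the stated constant $\sqrt{p/2}$ at $p=6$.
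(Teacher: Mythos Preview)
Your argument is correct and follows essentially the same route as the paper: raise the grid function to the power $p/2$, apply discrete Parseval, use the aliasing identity to compare with $(\phi_{\bf F})^{p/2}$ (or $(f_{\bf F})^{p/2}$), and bound the alias sum by Cauchy--Schwarz. The paper writes out the $p=4$ case by listing the at-most-four aliases per mode explicitly and states that $p=6$ is analogous; your unified alias count $M_{\max}\le(p/2)^2$ together with $(p/2)^{2/p}\le\sqrt{p/2}$ is a cleaner packaging of the same computation and makes the injectivity step (each shifted index appearing at most once) explicit.
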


\begin{proof} 
For simplicity of presentation, we only present the analysis for $\| f \|_p \le \sqrt{\frac{p}{2}} \| f_{\bf F} \|_{L^p}$; the analysis for $\phi$ could be carried out in the same fashion. And also, we are focused on the case of $p=4$. The case with $p=6$ could be handled in a similar, yet more tedious way. 

We denote the following grid function 
\begin{equation} 
  g_{i+\hf,j+\hf} = \left( f_{i+\hf,j+\hf} \right)^2 .  \label{lemma A.2-1} 
\end{equation} 
A direct calculation shows that 
\begin{equation} 
  \nrm{ f }_4 = \left( \nrm{ g }_2 \right)^{\frac12} . \label{lemma A.2-2} 
\end{equation}  
Note that both norms are discrete in the above identity. Moreover, we assume the grid function $g$ has a discrete Fourier expansion as 
\begin{equation}
  g_{i+\hf,j+\hf} = \sum_{\ell,m=-K}^{K}
   (\hat{g}^N_c)_{\ell,m} \mathrm{e}^{2\pi {\rm i}  (\ell x_{i+1/2} + m y_{j+\hf} )} ,  
  \label{lemma A.2-3}
\end{equation}
and denote its continuous version as 
\begin{equation}
  G (x,y) = \sum_{\ell,m=-K}^{K}
   (\hat{g}^N_c)_{\ell,m} \mathrm{e}^{2\pi {\rm i} (\ell x + m y )}  
 \in {\cal P}_{K} .  
  \label{lemma A.2-4}
\end{equation}
With an application of the Parseval equality at both the discrete and continuous levels, we have 
\begin{equation} 
  \nrm{ g }_2^2 = \nrm{ G }^2 
  = \sum_{\ell,m=-K}^{K}  \left|  (\hat{g}^N_c)_{\ell,m}  \right|^2  .  \label{lemma A.2-5}
\end{equation}

  On the other hand, we also denote 
\begin{equation} 
  H (x,y) = \left( f_{\bf F} (x,y) \right)^2 
  = \sum_{\ell,m=-2K}^{2K}
   (\hat{h}^N)_{\ell,m} \mathrm{e}^{2\pi {\rm i} (\ell x + m y )}  
 \in {\cal P}_{2K} .  \label{lemma A.2-6}
\end{equation}
The reason for $H \in {\cal P}_{2K}$ is because $f_{\bf F} \in {\cal P}_{K}$. We note that $H \ne G$, since $H \in {\cal P}_{2K}$, while $G \in {\cal P}_{K}$, although $H$ and $G$ have the same interpolation values on at the numerical grid points $(x_{i+\hf}, y_{j+\hf})$. In other words, $g$ is the interpolation of $H$ onto the numerical grid point and $G$ is the continuous version of $g$ in ${\cal P}_{K}$. As a result, collocation coefficients $\hat{g}_c^N$ for $G$ are not equal to $\hat{h}^N$ for $H$, due to the aliasing error. In more detail, for $- K \le \ell, m \le K$, we have the following representations: 
\begin{eqnarray} 
  ( \hat{g}_c^N )_{\ell,m} =  \left\{  \begin{array}{l} 
      (\hat{h}^N)_{\ell,m} + (\hat{h}^N)_{\ell+N,m} 
  + (\hat{h}^N)_{\ell,m+N} + (\hat{h}^N)_{\ell+N,m+N} ,  \, \,  
   \ell < 0 , m < 0 , 
\\
      (\hat{h}^N)_{\ell,m} + (\hat{h}^N)_{\ell+N,m} ,  \, \,  
   \ell < 0 , m = 0 ,  
\\
   (\hat{h}^N)_{\ell,m} + (\hat{h}^N)_{\ell+N,m} 
  + (\hat{h}^N)_{\ell,m-N} + (\hat{h}^N)_{\ell+N,m-N} ,  \, \,  
  \ell < 0 , m > 0   , 
\\
   (\hat{h}^N)_{\ell,m} + (\hat{h}^N)_{\ell -N,m} 
  + (\hat{h}^N)_{\ell,m-N} + (\hat{h}^N)_{\ell-N,m-N} ,  \, \,  
  \ell > 0 , m > 0 , 
\\
   (\hat{h}^N)_{\ell,m} + (\hat{h}^N)_{\ell-N,m} ,  \, \,  
  \ell > 0 , m = 0 , 
\\
   (\hat{h}^N)_{\ell,m} + (\hat{h}^N)_{\ell-N,m} 
  + (\hat{h}^N)_{\ell,m+N} + (\hat{h}^N)_{\ell-N,m+N} ,  \, \,  
  \ell > 0 , m < 0 , 
\\ 
      (\hat{h}^N)_{\ell,m} + (\hat{h}^N)_{\ell,m+N} ,  \, \,  
   \ell = 0 , m < 0 , 
\\
      (\hat{h}^N)_{\ell,m}  ,  \, \,  
   \ell = 0 , m = 0 ,  
\\
   (\hat{h}^N)_{\ell,m} + (\hat{h}^N)_{\ell,m-N}  ,  \, \,  
  \ell = 0 , m > 0  . 
\end{array}  \right.  \label{lemma A.2-7}
\end{eqnarray}
With an application of Cauchy inequality, it is clear that 
\begin{equation} 
  \sum_{\ell,m=-K}^{K}  
  \left|  (\hat{g}^N_c)_{\ell,m}  \right|^2  
  \le 4 \left| \sum_{\ell,m=-2K}^{2K}
   (\hat{h}^N)_{\ell,m}  \right|^2 .  \label{lemma A.2-8}
\end{equation}

  Meanwhile, an application of Parseval's identity to the Fourier expansion (\ref{lemma A.2-6}) gives 
\begin{equation} 
  \nrm{ H }^2 = \left| \sum_{\ell,m=-2K}^{2K}
   (\hat{h}^N)_{\ell,m}  \right|^2 .  \label{lemma A.2-9}
\end{equation}
Its comparison with (\ref{lemma A.2-5}) indicates that 
\begin{equation} 
  \nrm{ g }_2^2 = \nrm{ G }^2 
   \le 4 \nrm{ H }^2  ,  \quad \mbox{i.e.} \, \, 
  \nrm{ g }_2 \le 2 \nrm{ H } ,  \label{lemma A.2-10}
\end{equation}
with the estimate (\ref{lemma A.2-8}) applied. Meanwhile, since $H (x,y) = \left( f_{\bf F} (x,y) \right)^2$, we have 
\begin{equation} 
  \nrm{ f_{\bf F} }_{L^4} = \left( \nrm{ H }_{L^2} \right)^{\frac12} . \label{lemma A.2-11}
\end{equation}
Therefore, a combination of (\ref{lemma A.2-2}), (\ref{lemma A.2-10}) and (\ref{lemma A.2-11}) results in 
\begin{equation} 
  \nrm{ f }_4 = \left( \nrm{ g }_2 \right)^{\frac12} 
  \le \left( 2 \nrm{ H }_{L^2} \right)^{\frac12} 
  \le \sqrt{2} \nrm{ f_{\bf F} }_{L^4}  . \label{lemma A.2-12}
\end{equation}
This finishes the proof of (\ref{lemma A.2-0}) with $p=4$, the inequality with $p=6$ could be proved in the same fashion. 
\end{proof} 

Now we proceed into the proof of \eqref{prop 1-0-3} in Proposition~\ref{prop:1}. 

\begin{proof} 
  We begin with an application of (\ref{lemma A.2-0}) in Lemma~\ref{lem:A.2}: 
\begin{eqnarray} 
  \| \mD_x \phi \|_6 = \| f \|_6 \le  \sqrt{3} \| f_{\bf F} \|_{L^6} .  \label{prop 1.1-1} 
\end{eqnarray}  
Meanwhile, using the fact that $\overline{ f _{\bf F} }=0$, we apply the 2-D Sobolev inequality and get 
\begin{eqnarray} 
  \| f_{\bf F} \|_{L^6} \le B_0^{(1)} \| f_{\bf F} \|_{H^1} \le C ( \| f_{\bf F} \| + \| \nabla f_{\bf F} \| )   .  \label{prop 1.1-2} 
\end{eqnarray} 
Moreover, the estimates (\ref{lemma A.1-1})-(\ref{lemma A.1-3}) (in Lemma~\ref{lem:A.1}) indicate that 
\begin{eqnarray} 
  && 
  \nrm{f_{\bf F} } \le \nrm{\partial_x \phi_{\bf F} }  \le  \frac{\pi}{2} \| \nabla_h \phi \|_2 , 
  \label{prop 1.1-3-1}
\\
  && 
  \nrm{\partial_x f_{\bf F} } \le \nrm{\partial_x^2 \phi_{\bf F} }   
  \le M_0 \nrm{\Delta \phi_{\bf F} }  \le  \frac{\pi^2 M_0}{4} \| \Delta_h \phi \|_2 , 
  \label{prop 1.1-3-2}  
\\
  && 
  \nrm{\partial_y f_{\bf F} } \le \nrm{\partial_x \partial_y \phi_{\bf F} }   
  \le M_0 \nrm{\Delta \phi_{\bf F} }  \le  \frac{\pi^2 M_0}{4} \| \Delta_h \phi \|_2 , 
  \label{prop 1.1-3-3}  
\\
  && 
  \mbox{so that} \, \, \, 
  \nrm{f_{\bf F} } + \nrm{\nabla f_{\bf F} }  \le  \frac{\sqrt{2} \pi^2 M_0}{4} ( \| \nabla_h \phi \|_2 + \| \Delta_h \phi \|_2 ) , 
  \label{prop 1.1-3-4} 
\end{eqnarray} 
in which the following elliptic regularity estimate is applied:
\begin{eqnarray} 
  \nrm{\partial_x^2 \phi_{\bf F} }  ,  \nrm{\partial_x \partial_y \phi_{\bf F} }   
  \le M_0 \nrm{\Delta \phi_{\bf F} } . \label{prop 1.1-4}   
\end{eqnarray} 
Therefore, a substitution of (\ref{prop 1.1-3-2}), (\ref{prop 1.1-3-4}) and (\ref{prop 1.1-2}) into (\ref{prop 1.1-1}) results in 
\begin{eqnarray} 
   \| \mD_x \phi \|_6  \le \frac{\sqrt{6} \pi^2 M_0 B_0^{(1)}}{4}  \| \phi \|_{H_{h}^2}  .    \label{prop 1.1-5} 
\end{eqnarray} 

The estimate for $\| D_y \phi \|_6$ could be derived in the same fashion: 
\begin{eqnarray} 
   \| \mD_y \phi \|_6  \le \frac{\sqrt{6} \pi^2 M_0 B_0^{(1)}}{4}  \| \phi \|_{H_{h}^2} .  \label{prop 1.1-6} 
\end{eqnarray}  

As a consequence, \eqref{prop 1-0-3} is valid, by setting $C = \sqrt{2} B_0^{(1)}$. The proof of Proposition~\ref{prop:1} is complete. 
\end{proof}

	\bibliographystyle{plain}
	\bibliography{mbe}
	\end{document}